\newtheorem{theorem}{Theorem}
\newtheorem{lemma}{Lemma}
\newtheorem{definition}{Definition}
\newtheorem{remark}{Remark}
\def\tsc#1{\csdef{#1}{\textsc{\lowercase{#1}}\xspace}}
\begin{document}
\let\WriteBookmarks\relax
\def\floatpagepagefraction{1}
\def\textpagefraction{.001}

\shorttitle{}    

\shortauthors{}  

\title [mode = title]{Distributionally Robust Fault Detection Trade-off Design with Prior Fault Information}  



%

\author[1]{Yulin Feng}[orcid=0009-0009-5218-3279]
\ead{fyl23@mails.tsinghua.edu.cn}

\author[1]{Hailang Jin}[orcid=0000-0002-7705-3843]
\ead{hljin@tsinghua.edu.cn}

\author[2]{Steven X. Ding}[orcid=0000-0002-5149-5918]
\ead{steven.ding@uni-due.de}

\author[1]{Hao Ye}[orcid=0000-0003-1607-244X]
\ead{haoye@tsinghua.edu.cn}

\author[1]{Chao Shang}[orcid=0000-0003-3905-4631]
\ead{c-shang@tsinghua.edu.cn}
\cormark[1]




\affiliation[1]{organization={Department of Automation, Beijing National Research Center for Information Science and Technology, Tsinghua University},
            postcode={100084},
            state={Beijing},
            country={China}}

\affiliation[2]{organization={Institute for Automatic Control and Complex Systems, University of Duisburg-Essen},
            postcode={47057},
            state={Duisburg},
            country={Germany}}





\cortext[1]{Corresponding author}



\begin{abstract}
The robustness of fault detection algorithms against uncertainty is crucial in the real-world industrial environment. Recently, a new probabilistic design scheme called distributionally robust fault detection (DRFD) has emerged and received immense interest. Despite its robustness against unknown distributions in practice, current DRFD focuses on the overall detectability of all possible faults rather than the detectability of critical faults that are \textit{a priori} known. Henceforth, a new DRFD trade-off design scheme is put forward in this work by utilizing prior fault information. The key contribution includes a novel distributional robustness metric of detecting a known fault and a new relaxed distributionally robust chance constraint that ensures robust detectability. Then, a new DRFD design problem of fault detection under unknown probability distributions is proposed, and this offers a flexible balance between the robustness of detecting known critical faults and the overall detectability against all possible faults. To address the resulting semi-infinite chance-constrained problem, we first reformulate it to a finite-dimensional problem characterized by bilinear matrix inequalities. Subsequently, a tailored heuristic solution algorithm is developed, which includes a sequential minimization procedure and an initialization strategy. Finally, case studies on a simulated three-tank system and a real-world battery cell are carried out to showcase the effectiveness of the proposed heuristic algorithm and the advantages of our DRFD method.
\end{abstract}




\begin{keywords}
Fault detection \sep Residual generation \sep Distributionally robust optimization \sep Distributionally robust chance constraint
\end{keywords}

\maketitle

\section{Introduction}
The growing complexity and interconnectivity of modern industrial processes underscore the critical importance of ensuring system safety and reliability. Advanced fault detection (FD) techniques that can effectively handle various types of uncertainty are thus in urgent need. In the past decades, model-based and data-driven FD methods have been extensively investigated and widely applied in a variety of fields. A summary of related methods can be found in \cite{ding2008model,ding2014data}. 
In general, current FD methods are built upon the so-called analytical redundancy, and their main steps include residual generation and residual evaluation \citep{ding2008model}. The aim of residual generation is to produce an informative indicator that captures the unpredictable variations in dynamic systems and ensures sensitivity to faults. Various methods have been developed for the design of residual generators, including observer-based methods \citep{ding2019application}, parity space-based methods \citep{chow1984analytical}, and subspace identification methods (SIM) \citep{ding2014dsubspace,jin2023small}. Residual evaluation is used to determine whether a fault has occurred or not by means of alarm logic. 

In practical dynamical systems, there exist various types of uncertain disturbances. Because of this, false alarm rate (FAR) and fault detection rate (FDR) are important performance indices in a probabilistic context and the current research focuses on probabilistic FD design that enjoys robustness against uncertainty. For a residual generator, computing its FAR and FDR entails full knowledge of the distribution of disturbance. However, this is usually unavailable in engineering practices, and the Gaussian assumption has been mostly used as an approximation \citep{ding2014data,yang2019new,fezai2020online}. On this basis, FD via statistical hypothesis testing has been extensively explored, such as the generalized likelihood ratio test \citep{fezai2020online} and Mann-Whitney test \citep{yang2019new}.
Unfortunately, the Gaussian assumption itself may not hold true in practice, resulting in unexpected performance loss of FD. 

To overcome this limitation, distribution-free methods that consider the worst-case performance against uncertainty in a prescribed compact set
have been profoundly investigated  \citep{wang2020dynamic,hast2015detection}. Nevertheless, lacking prior knowledge of the distribution may result in over-conservatism. 
As a hotspot in operations research community, distributionally robust optimization (DRO) offers a systematic and effective solution to address these drawbacks; more importantly, DRO does not require estimating the full probability distribution but introduces an ambiguity set containing all possible probability distributions with similar statistical attributes instead. The two core elements are the shape and the size of the ambiguity set, by which the expected performance under the worst-case distribution is improved. It is a common practice that the former is chosen by decision-makers, such as moment-based constraints \citep{delage2010distributionally,zymler2013distributionally} and metric-based ball \citep{mohajerin2018data,gao2023distributionally,lam2019recovering}. The advances in DRO have recently inspired the distributionally robust fault detection (DRFD) scheme \citep{shang2021distributionally,wan2021distributionally,shang2022generalized,xue2022integrated}. It concurrently yields a residual generator and the alarm threshold by solving an integrated DRO problem while being robust to uncertainty in probability distributions. Thanks to duality theory, the DRFD problem can be reformulated as a deterministic optimization problem that is easier to solve than the primal problem.

In addition to effectively controlling FAR, the detectability for known critical faults is also essential in practice. These critical faults often include high-stake faults and high-frequency faults. The former are known to cause significant consequences to the system security and can be obtained from expert knowledge. For example,  a power grid is known to be significantly influenced by transformer failure, thus ensuring its timely detection is of great importance \citep{yu2024automatic}. The latter can be observed from historical data and obtained using various data-driven methods \citep{feng2020domain,van2021data,yao2024domain}. However, in the existing DRFD designs \citep{shang2021distributionally,shang2022generalized}, only the overall fault detectability of unknown faults is considered, and the information of critical faults is omitted. Hence, it is worth investigating a robust detection scheme considering detectability against both known and unknown faults in a distributionally robust fashion.
To address these issues, this paper seeks to achieve an appropriate trade-off between detecting known critical faults and detecting other unknown faults within the DRFD framework. The main contributions of this paper are listed as follows:
\begin{itemize}
    \item  We formalize a new design problem that manages the trade-off between detecting critical faults and the overall detectability against unknown faults. As a cornerstone of this scheme, we introduce a novel metric, derived from a relaxed distributionally robust chance constraint (DRCC) that leverages prior fault information, to quantify the distributional robustness for detecting known critical faults.
     \item The design problem is semi-infinite and cannot be solved straightforwardly, thus we derive an exact reformulation of the proposed DRFD design problem, which leads to a non-convex problem due to the presence of bilinear matrix inequalities. To address the resulting problem, a tailored heuristic algorithm is developed, involving a sequential minimization procedure and a dedicated initialization strategy.
    \item The detection performance and efficiency of our method are demonstrated through case studies on a simulated three-tank system and a realistic lithium-ion battery cell. These results highlight that the proposed DRFD provides a systematic approach to account for the sensitivity against known critical faults and unknown ones.
\end{itemize}

The rest of this article is organized as follows. Section \ref{preli} revisits the basics of residual generation and evaluation for dynamic systems, as well as the design formulation of DRFD. The main results of this work are shown in Section \ref{mainr}, and case studies are presented in Sections \ref{simu1} and \ref{simu2}. Section \ref{concl} concludes this article.

\textit{Notations:} We use $\mathbb{N}_{i:j}$ to denote the set of non-negative integer indexes $\{i,\cdots,j\}$. $P \succeq 0$  denotes the positive semi-definiteness of a symmetric $P$, $P(:,i:j)$ denotes the submatrix of $P$ from the $i$th column to the $j$th column, and $\lVert P \rVert_{F}$ denotes the Frobenius norm.  ${\rm tr}(\cdot)$  and ${\rm det}(\cdot)$ are the trace and the determinant operator of a square matrix. For a vector $f$,  $\lVert f \rVert^2_{P} =f^\top Pf$ and
 $\lVert f \rVert^2$ = $f^\top f$. $\delta_\xi$ represents the Dirac distribution concentrated at $\xi$. For a given closed set $\mathcal{S}$, the indicator function $\mathbb{1}_{{\rm cl}(\xi \in \mathcal{S})}=1$ if $\xi \in \mathcal{S}$; and $0$ otherwise.
 For a discrete-time signal $x(k)$, the concatenated vector is denoted by $x_s(k) = \left[x(k-s)^\top \cdots x(k)^\top \right]^\top \in \mathbb{R}^{(s+1)n_x }$ and the block Hankel matrix of $ N \in \mathbb{N}_+$ interval  $X_{k,s} = \left[x_s(k-N+1) \cdots x_s(k) \right] \in \mathbb{R}^{(s+1)n_x \times N}$. The lower triangular block-Toeplitz operator $\mathcal{T}(A, B, C, D)$ is defined as:
\begin{equation*}
    \mathcal{T}(A, B, C, D) =\begin{bmatrix}D&0&0&\cdots&0\\CB&D&0&\cdots&0\\CAB&CB&D&\cdots&0\\\vdots&\ddots&\ddots&\ddots&\vdots\\CA^{s-1}B&\cdots&CAB&CB&D\end{bmatrix} .
\end{equation*}

\section{Preliminaries and Problem Formulation} \label{preli}
\subsection{Residual Generation and Evaluation}
Consider a stochastic discrete-time linear time-invariant (LTI) system described by
\begin{equation}\begin{cases} {x}(k+1)=A{x}(k)+B {u}(k)+B_dd(k)+B_{f} {f}(k)\\{y}(k)=C {x}(k)+D {u}(k)+D_dd(k)+D_f{f}(k)&\end{cases} \label{eq_statepace}\end{equation}
where ${x}\in\mathbb{R}^{n_{x}}$, ${y}\in\mathbb{R}^{n_y}$, ${u}\in\mathbb{R}^{n_u}$, $d\in\mathbb{R}^{n_d}$, and ${f}\in\mathbb{R}^{n_f}$ are process state, measured output, control input, stochastic disturbance, and additive faults, respectively. 
$A$, $B$, $B_d$, $B_f$, $C$, $D$, $D_d$, and $D_f$ are time-invariant matrices having appropriate dimensions. The transfer function matrix from $u$ to $y$ is denoted by $G_u\left(z\right)$, where $z$ is the time-shift operator. It is assumed that $(A, C)$ is observable. 

\begin{remark}
The assumption of additive faults in \eqref{eq_statepace} is rather common in fault diagnosis literature \citep{ding2008model,ding2014data}. An underlying fault enters the system through matrices $B_f$ and $D_f$, which encompass a wide range of system malfunctions.  
For instance, actuator faults can be modeled by setting $B_{f}=B$ and $D_f=0$, whereas sensor faults are represented with $B_{f}=0$ and $D_f=I$. 
This unified representation paves the way for developing a detection framework capable of handling multiple fault types.
\end{remark}

The stable kernel representation (SKR) is a general way to represent the residual generator for LTI systems \citep{ding2008model,ding2014data}. Under the  disturbance- and fault-free conditions, the following relation holds:
\begin{equation*}
\mathcal{K}(z)\begin{bmatrix} u(z)\\y(z) \end{bmatrix} \equiv 0,~\forall u,
\end{equation*}
where $\mathcal{K}(z)=\begin{bmatrix}-\hat{N}(z)&\hat{M}(z)\end{bmatrix} $ can be established from the left coprime factorization of the transfer function matrix $G_u\left(z\right) = \hat{M}^{-1}(z) \hat{N}(z)$. Motivated by SKR, a general residual generator is expressed in the following form:
\begin{equation*}
r(z) = \mathcal{K}(z)\begin{bmatrix} u(z)\\y(z) \end{bmatrix} ,
\end{equation*}
where $r(z) \in \mathbb{R}^{n_r}$ is the residual, and its design form in the time domain essentially takes the form of an affine function combination of fault and disturbance:
\begin{equation}
    r(k)=P[Wd_s(k)+Vf_s(k)],\label{eq:resi_p}
\end{equation}
where $P$ is the projection matrix that parameterizes the residual generator, and shall be deliberately designed to preserve ensuring sensitivity to faults and robustness to disturbances. The coefficient matrices $\{W, V\}$ are known, and it is normally assumed that $W$ has full row rank. One can construct $\{W, V\}$ from system matrices or available past fault-free data, e.g. using the parity space method \citep{chow1984analytical} or SIM \citep{ding2014dsubspace}.
Given known state-space matrices, the parity relation of a given order $s \geq n_x$ is established as follows:
\begin{equation} \label{eq_parity}
y_s(k) = \Gamma_s x(k-s)+ H_{u,s}u_s(k) + H_{d,s}d_s(k) + H_{f,s}f_s(k),
\end{equation}
where
\begin{align*}
    &\Gamma_s = \begin{bmatrix}C^\top & (CA)^\top & \dots & \left(CA^{s}\right)^\top\end{bmatrix}^\top,\\
    &H_{u,s}= \mathcal{T}(A, B, C, D),\\
    &H_{d,s}= \mathcal{T}(A, B_d, C, D_d),\\
    &H_{f,s}= \mathcal{T}(A, B_f, C, D_f).
\end{align*}
Thus, the residual generator can be designed based on the parity relation \eqref{eq_parity}:
\begin{equation*} 
r(k) = P \cdot \Gamma^{\perp}_s \left[  y_s(k) - H_{u,s}u_s(k) \right],
\end{equation*}
where $\Gamma^{\perp}_s$ is the basis matrix of the left null space of $\Gamma$, satisfying ${\rm rank}(\Gamma^{\perp}_s) = sn_y-n_x$ and $\Gamma^{\perp}_s \Gamma_s = 0$. 

For systems with unknown state-space matrices, $\{W, V\}$ can be identified from the subspace composed of available historical input-output data
\begin{align*}
    Z_{\rm p} = \begin{bmatrix}
        U_{\rm p}\\
        Y_{\rm p}
    \end{bmatrix}=\begin{bmatrix}
        U_{k-s_{\rm p}-1,s_{\rm p}}\\
        Y_{k-s_{\rm p}-1,s_{\rm p}}
    \end{bmatrix}
    ,~Z_{\rm f} = \begin{bmatrix}
        U_{\rm f}\\
        Y_{\rm f}
    \end{bmatrix}=\begin{bmatrix}
        U_{k,s_{\rm f}}\\
        Y_{k,s_{\rm f}}
    \end{bmatrix}.
\end{align*}
For example, the data-driven SKR can be realized by QR factorization of past and future datasets $\{Z_{\rm p}, Z_{\rm f}\} $ \citep{ding2014data,ding2014dsubspace} :
\begin{equation*}\begin{bmatrix}Z_{\rm p}\\U_{f}\\Y_{f}\end{bmatrix}=\begin{bmatrix}R_{11}&0&0\\R_{21}&R_{22}&0\\R_{31}&R_{32}&R_{33}\end{bmatrix}\begin{bmatrix}Q_1\\Q_2\\Q_3\end{bmatrix}.\end{equation*}
By solving the following equation for $\mathcal{K}_{{\rm d},s}  $:
\begin{equation}
\mathcal{K}_{{\rm d},s}  \begin{bmatrix}R_{21}&R_{22}\\R_{31}&R_{32}\end{bmatrix} = 0.
\end{equation}
the residual generator can be constructed as
\begin{align*}
    r(k)&=P \cdot \mathcal{K}_{{\rm d},s}  \begin{bmatrix}u_s(k)\\y_s(k)
\end{bmatrix}.
\end{align*}
Additionally, one should set $W$ as an identity matrix by taking the residuals $R_{33}Q_3$ as samples of $d_s(k)$. For sensor bias and actuator failure, $V$ can also be readily specified \citep{ding2014dsubspace,shang2022generalized}. After obtaining the residual, the residual evaluation process follows the alarm logic:
\begin{equation*}
    \begin{cases}
    J(r) > J_{{\rm th}} \Rightarrow & \text{fault alarm} \\
    J(r) \leq J_{{\rm th}} \Rightarrow & \text{no alarm}
    \end{cases}
\end{equation*}
where $J_{\rm th}$ is the alarm threshold, and the Euclidean norm evaluation function $J(r) = \lVert r\rVert^2$ is mostly used. Inevitably, the uncertainty in $d(k)$ may lead to unnecessary alarms during fault-free operation and missed alarms when a fault is present. Thus, FAR and FDR are defined as important indicators to evaluate the FD performance in handling this inherent conflict.

\begin{definition}[FAR and FDR, \cite{ding2008model}]
Given the threshold $J_{\rm th}$, the FAR and FDR for a particular fault ${f} \neq 0$ of an FD design is defined as
\begin{align*}    
{\rm FAR} &= \mathbb{P} \{\lVert r \rVert^2 > J_{{\rm th}} \mid f=0\},\\
{\rm FDR} &=  \mathbb{P} \{\lVert r \rVert^2
> J_{{\rm th}} \mid f \neq 0 \}.
\end{align*}
\end{definition}

\subsection{Problem Formulation of DRFD}
A critical point of FD design \eqref{eq:resi_p} is the choice of $P$, around which numerous solutions have been developed. Among previous works, the unified solution is a conventional approach, which maximizes the ratio of the gains of $f$ relative to $d$ \citep{wunnenberg1987sensor}:
\begin{equation}\label{eq_US}
    P^{*}=\arg\min_{P} \frac{\lVert PW \rVert}{\lVert PV \rVert} = \Lambda_{W}^{-1} U_{W}^\top.
\end{equation}
This can be solved by performing singular value decomposition (SVD) of $W=U_{W} \Lambda_{W}V_{W}^\top$  \citep{ding2008model}. After designing $P^*$, the Gaussian assumption with $d \sim \mathcal{N}(0,\Sigma_d)$ is often adopted, and subsequently, $\lVert r \rVert^2$ under fault-free conditions is approximately governed by $\chi^2$-distribution. Consequently, given a prescribed FAR, the alarm threshold $J_{\rm th}$ can be calibrated to a specific upper quantile of a $\chi^2$-distribution \citep{ding2014data,fezai2020online,yang2019new}. 

However, the accuracy of the unified solution \eqref{eq_US} may not be satisfying in engineering practice. The reason is that \eqref{eq_US} is defined using norm-based sensitivity and suffers from over-conservatism. Furthermore, the Gaussian assumption is fragile, leading to performance misspecification when the uncertainties deviate from the Gaussian distribution. To address this issue, DRFD has emerged as an effective scheme \citep{shang2021distributionally}. It regards $\xi = Wd_s(k)$ as the source of uncertainty in residual generation, and thus the relation in \eqref{eq:resi_p} simplifies to:
\begin{equation}
        r=P(\xi+Vf),\label{eq:resi_s}
\end{equation}
where the dependence on the time index $k$ and the order $s$ is dropped for brevity. In engineering practice, the true distribution $\mathbb{P}^*_\xi$ of the disturbance is not known exactly, and thus using an ambiguity set $\mathcal{D}$ to capture the uncertainty of  $\mathbb{P}^*_\xi$ is a common practice in DRO \citep{shang2022generalized,tan2024design}. On this basis, the DRFD design scheme aims at maximizing the overall detectability metric under the worst-case FAR constraint within $\mathcal{D}$:
\begin{equation} \label{eq_DRFD}
\begin{aligned}
\max_{P} &~ \rho\left(P\right)\\
{\rm s.t.} &~ \sup_{\mathbb{P}_{\xi} \in \mathcal{D}}\mathbb{P}_{\xi} \{\lVert r\rVert^2 > 1 \mid f=0\} \leq \alpha
\end{aligned}\tag{DRFD}
\end{equation}
where $\alpha \in (0,1)$ is an upper bound of FAR that is intuitive to specify for the user.
In \eqref{eq_DRFD}, setting $J_{\rm th}=1$ causes no loss of generality. This is because any design with arbitrary matrix $P$ and threshold $J_{\rm th}>0$ can be normalized into an equivalent solution pair $P'={P}/{\sqrt{J_{\rm th}}}$ and $J_{\rm th}'=1$. The DRCC
The objective is to maximize an overall fault detectability metric function $\rho(\cdot)$. It geometricly evaluates the impact of an unknown profile $f$ on the residual, and can be chosen as the Frobenius norm metric \citep{ding2019application}
\begin{equation}
\rho_1(P)=\|PV\|_F^2=\mathrm{tr}(V^\top P^\top PV), \label{eq_rho1}
\end{equation}
  or the pseudo-determinant metric \citep{shang2021distributionally}
 \begin{equation}
 \begin{aligned}
            \rho_{2}(P)& =\log{\rm pdet}(V^\top P^\top PV) \\&=\log\det(\Lambda^{\top}U_{1}^{\top}P^{\top}PU_{1}\Lambda),
 \end{aligned} \label{eq_rho2}
 \end{equation}
where $V = U_1 \Lambda U_2^{\top}$ is the compact SVD and $\Lambda$ is diagonal and non-singular. 

\begin{remark}\label{remark_rho}
The transfer matrix $PV$ essentially maps the fault signal $f$ to the residual $r$. Mathematically, $\rho_1(\cdot)$ is the sum of squares of all singular values of $PV$, and $\rho_2(\cdot)$ amounts to the product of all non-zero singular values. Therefore, maximizing $\rho_2(\cdot)$ avoids excessively small non-zero singular values, and thus encourages a more balanced and uniform sensitivity to different fault directions than maximizing $\rho_1(\cdot)$. On the other hand, maximizing $\rho_1(\cdot)$ is computationally advantageous because it is linear in $\bar P=P^\top P \succeq 0$ \citep{shang2021distributionally}.
\end{remark}

By solving \eqref{eq_DRFD}, the optimal solution $P^*$ can safely keep the FAR below $\alpha$ under routine fault-free conditions for any $\mathbb{P}_\xi$ in the ambiguity set $\mathcal{D}$, while maximizing the impact of unknown fault $f$ on the residual, gives rise to an integrated DRFD design scheme. Central to the DRFD design is the choice of $\mathcal{D}$, and the Wasserstein ambiguity set has been prevalent recently, which critically relies on the Wasserstein distance for evaluating the discrepancy between two distributions.
\begin{definition}[Wasserstein distance, \cite{kantorovich1958space}]
Given two distributions $\mathbb{P}$ and $\mathbb{P}'$, whose random $\xi$ is over the support set $\Xi \subseteq \mathbb{R}^{n_\xi}$, the Wasserstein distance is defined as:
\begin{equation*}
d_{\rm W}(\mathbb{P},{\mathbb{P}}') = \inf_{\mathbb{Q}\in \mathcal{Q}} \left\{ \int_{\Xi^2} \rVert \xi - \xi'\lVert  \mathbb{Q}\left({\rm d}\xi,{\rm d}\xi'\right) \right\},
\end{equation*}
where $\mathbb{Q} \in \mathcal{Q}$ is composed of all the joint distributions of $\xi$ and $\xi'$ with marginal distributions $\mathbb{P}$ and $\mathbb{P}'$.
\end{definition}

The Wasserstein distance is known to be the minimum transport cost of moving the probability mass from $\mathbb{P}$ to $\mathbb{P}'$ in optimal transport theory \citep{mohajerin2018data}. Based on this, the Wasserstein ambiguity set can be defined as follows.

\begin{definition}[Wasserstein ambiguity set, \cite{mohajerin2018data}] \label{def_ambiguity}
Given $N$ independent samples $\{\hat{\xi}_i\}_{i=1}^N$, and Wasserstein radius $\theta$, the Wasserstein ambiguity set is defined as:
\begin{equation*}
    \mathcal{D}_{\rm W}(\theta ; N) = \left\{\mathbb{P}\in \mathcal{M}(\Xi)\left|d_{\rm W}(\mathbb{P},\hat{\mathbb{P}}_{N})\leq\theta\right.\right\},
\end{equation*}
where $\mathcal{M}(\Xi)$ is the probability space supported on $\Xi$, $\theta$ is the radius of the ambiguity set, and $\hat{\mathbb{P}}_{N} = \frac{1}{N} \sum_{i=1}^{N} \delta_{\hat{\xi}_i}$ is the empirical distribution.
\end{definition}

The Wasserstein ambiguity set encapsulates a family of distributions that ``resemble" the empirical distribution $\hat{\mathbb{P}}_{N}$. With the radius $\theta$ appropriately chosen, $\mathcal{D}_{\rm W}(\theta ; N)$ is able to capture the true distribution $\mathbb{P}^*_\xi$ with a high probability in a data-driven fashion.
Indeed,  the uncertainty $\xi$ in \eqref{eq:resi_s} can be seen as the primary residual signal under routine fault-free conditions, so the samples $\{\hat{\xi}_i\}_{i=1}^N$  can be easily collected from an off-the-shelf residual generator to construct $\mathcal{D}_{\rm W}(\theta; N)$ \citep{shang2021distributionally}.

\section{Main result} \label{mainr}
\subsection{A New Trade-off Formulation of DRFD}
In engineering practice, some critical faults may be \textit{a priori} known in advance, and their detectability is of direct interest. However, the design scheme in \eqref{eq_DRFD} only maximizes the overall detectability against \textit{unknown faults} but makes no use of \textit{known} critical fault information. As a result, it may not perform satisfactorily in the presence of such critical faults.

In this work, it is assumed that the profile of a critical fault, e.g. a sensor fault or an actuator fault, is \textit{a priori} known as $\bar{f}$. The goal is to effectively utilize the directional information of $\bar{f}$ to improve the detectability of DRFD design. 
First, a relaxed DRCC is proposed to robustly secure a high FDR of a critical fault $\bar{f}$: 
\begin{equation}
\begin{split}
   \mathbb{P}_\xi \left\{\lVert P\left(\xi + V \bar{f} \right)\rVert^2 > 1\right\}
\geq  \beta - {\frac{1}{\eta}} d_{\rm W}(\mathbb{P}_{\xi},\hat{\mathbb{P}}_{N}),& \\
\forall \mathbb{P}_{\xi} \in \mathcal{M}(\Xi),& 
\end{split}
\label{eq: 5}
\end{equation}
where $\beta \in (0,1)$ denotes a prescribed nominal lower bound of FDR under the empirical distribution $\hat{\mathbb{P}}_N$, and $\eta > 0$ is a decision variable. Intuitively, if the candidate distribution $\mathbb{P}_{\xi} = \hat{\mathbb{P}}_N$, a minimal FDR $\beta$ under $\bar{f}$ can be guaranteed by \eqref{eq: 5}. However, when $\mathbb{P}_{\xi}$ deviates from $\hat{\mathbb{P}}_N$, the FDR under $\bar{f}$ may become lower, and the reduction of FDR is at most $ 1 / \eta \cdot d_{\rm W}(\mathbb{P}_{\xi}, \hat{\mathbb{P}}_{N})$. Furthermore, the larger $\eta$, the less sensitivity of FDR to the deviation of $\mathbb{P}_{\xi}$. In this sense, $\eta$ yields a novel metric evaluating the \textit{distributional robustness} of detecting $\bar{f}$ when $\mathbb{P}^*_{\xi}$ is not exactly known.\footnote{Indeed, $1/\eta$ can be interpreted as the fragility in the robust satisficing scheme \citep{long2023robust}.} To robustly detect the occurrence of a known $\bar{f}$, a large $\eta$ is preferred.

As such, we propose to incorporate \eqref{eq: 5} in the design problem \eqref{eq_DRFD} and maximize $\eta$ jointly with the overall detectability $\rho(\cdot)$. This gives rise to a new DRFD trade-off design, formalized as follows:

\begin{subequations}\label{eq:RS_P}
\begin{align}
\max_{P,\eta>
0} &~ \rho\left(P\right) +  {\gamma}{\eta}  \label{eq:qp0} \\
{\rm s.t.} &~ \sup_{\mathbb{P}_{\xi} \in \mathcal{D}_{\rm W}(\theta, N)}
\mathbb{P}_\xi \left\{\lVert P \xi\rVert^2> 1 \right\} \leq \alpha \label{eq:qp1} \\
\begin{split} 
&~\sup_{\mathbb{P}_{\xi}  \in \mathcal{M}(\Xi)} \mathbb{P}_\xi \left\{\left \lVert  P \left( \xi + V \bar{f} \right)\right \rVert^2 \leq 1\right\}   \\
&~~~\qquad \qquad\quad -  {\frac{1}{\eta}} d_{\rm W}(\mathbb{P}_{\xi},\hat{\mathbb{P}}_{N})
  \leq 1 -\beta  ,\\
\end{split} \label{eq:qp2} 
\end{align}
\end{subequations}
Maximizing the distributional robustness metric $\eta$ in \eqref{eq:RS_P} encourages a low reduction of detectability of the critical fault when the true distribution $\mathbb{P}^*_{\xi}$ deviates from the empirical one $\hat{\mathbb{P}}_N$. In this sense, the objective \eqref{eq:qp0} attempts to trade off robust detection of $\bar{f}$ against the overall detectability $\rho(\cdot)$ via a regularization parameter $\gamma \ge 0$. By considering the robustness of detecting a critical fault $\bar{f}$ of specific interest, the relaxed constraint \eqref{eq:RS_P} offers a new trade-off design scheme as compared to the original design \eqref{eq_DRFD}. 
Unlike the hard constraint \eqref{eq:qp1} on FAR,  a relaxed constraint \eqref{eq:qp2} on FDR is applied to achieve the robustness of detecting $\bar{f}$.  
By introducing multiple relaxed DRCCs into \eqref{eq:RS_P}, the proposed DRFD scheme can be readily extended to tackle multiple critical faults: 
\begin{align*}
\max_{P,\eta_i>0} &~ \rho\left({{P}}\right) +  \sum^{K}_{i=0} {\gamma_i}{\eta_i}   \\
 {\rm s.t. } &~ \sup_{\mathbb{P}_{\xi} \in \mathcal{D}_{\rm W}(\theta, N)}
\mathbb{P}_\xi \left\{\lVert P \xi\rVert^2  > 1 \right\} \leq \alpha  \\
\begin{split} 
&~\sup_{\mathbb{P}_{\xi}  \in \mathcal{M}(\Xi)} \mathbb{P}_\xi \left\{\left\lVert  P \left( \xi + V \bar{f}_i \right)\right \rVert^2 \leq 1\right\}   \\
&~~~\qquad \quad -  {\frac{1}{\eta}} d_{\rm W}(\mathbb{P}_{\xi},\hat{\mathbb{P}}_{N})
  \leq 1 -\beta_i  ,~ \forall  i \in \mathbb{N}_{1:K}\\
\end{split}
\end{align*}
where $\beta_i$ and $\gamma_i$ are tolerance FAR levels and weight parameters corresponding to known faults $\bar{f_i}$. 

\begin{remark}
Another alternative option to replace \eqref{eq: 5} to ensure fault detectability would be to directly impose a DRCC on FDR:
    \begin{equation}
\sup_{\mathbb{P}_{\xi} \in \mathcal{D}_{\rm W}(\theta, N)}  \mathbb{P}_\xi \left\{\left\lVert  P \left( \xi + V \bar{f} \right) \right\rVert^2 > 1\right\} \geq \beta,  \label{eq_DRCC_FDR}
    \end{equation}
which bears a similar spirit to the DRCC \eqref{eq:qp1} on FAR. However, this choice may lead to significant practical difficulties. Note that only $\alpha $ needs to be set in the DRCC on FAR under the routine cases. Unlike the constraint on FAR, it is not clear whether setting a particular target $\beta$ of FDR is suitable under the critical fault profile $\bar{f}$. An improperly chosen $\beta$ can render the optimization problem infeasible. This issue can be well circumvented by using the proposed relaxed DRCC \eqref{eq: 5} by considering all possible distributions. Besides, another merit lies in that the realistic fault situation may more or less deviate from the assumed fault profile, and thus it is useful to have an extra level of robustness ensured by the relaxed DRCC \eqref{eq: 5}.

\end{remark}

\begin{remark}
In DRO, conditional value-at-risk (CVaR) is a common technique to approximate the chance constraint with convex programming reformulations at the cost of some conservatism \citep{hota2019data,xue2022integrated}. However, if we replace the DRCC \eqref{eq:qp2} with a worst-case CVaR constraint, the bilinear matrix inequalities (BMIs) in the reformulation cannot be eliminated.  Consequently, introducing CVaR would only add unnecessary conservatism without offering computational benefits.  For this reason, we focus on deriving an exact reformulation of the original chance constraint \eqref{eq:qp2}. 
\end{remark}

\subsection{Reformulation and Solution Algorithm}
Due to the infinite dimensions of $\mathbb{P}_\xi$ as well as constraints \eqref{eq:qp1} and \eqref{eq:qp2}, the distributionally robust chance-constrained program \eqref{eq:RS_P} is challenging to solve. This subsection seeks to derive a tractable counterpart reformulation alongside an effective solution algorithm. To this end, the following two lemmas are first presented.

\begin{lemma} \label{lemma:rs}
The following optimization problem on an arbitrary non-empty set $\mathcal{S} \in \Xi$, where the decision variable $\mathbb{P}_{\xi}$ is an arbitrary distribution on probability space $\mathcal{M}\left(\Xi\right)$
\begin{equation}\label{lemma_rs}
    \sup_{\mathbb{P}_{\xi} \in \mathcal{M}(\Xi)}\mathbb{P}_\xi\{\xi \in \mathcal{S}\} - {\frac{1}{\eta}} d_{\rm W}(\mathbb{P}_{\xi},\hat{\mathbb{P}}_{N})
\end{equation}
is equivalent to the following problem:
\begin{equation}
\begin{split}
\inf_{v}~ &~ {\frac{1}{N}} \sum^{N}_{i=1} \bar v_i\\
{\rm s.t.} &~ \bar v_i \geq 1 - {\frac{1}{\eta}} \inf_{\xi \in \mathcal{S}}\lVert\xi - \hat{\xi}_i \rVert ,~\forall i \in \mathbb{N}_{1:N}\\
&~ \bar v_i \geq 0,~\forall i \in \mathbb{N}_{1:N}
\end{split} \label{eq:lemma_rs_con}
\end{equation}
\begin{proof}
The proof can be made by adapting that of \cite[Theorem 3.1]{hota2019data} to the problem under study. Define $\kappa={1/ \eta} > 0$. By definition of the Wasserstein distance, \eqref{lemma_rs} can be rewritten as
\begin{equation}\label{eq:theo_ccp}
\begin{aligned}
    &\sup_{\mathbb{P}_{\xi} \in \mathcal{M}(\Xi)}\mathbb{P}_\xi\{\xi \in \mathcal{S}\} - \kappa d_{\rm W}(\mathbb{P}_{\xi},\hat{\mathbb{P}}_{N})\\ 
     =&\sup_{\mathbb{Q} \in \mathcal{Q}}~\mathbb{E}_{\mathbb{Q}}\left\{
     \mathbb{1}_{{\rm cl}(\xi \in \mathcal{S})}
     - \kappa \lVert\xi - \xi' \rVert\right\}
\end{aligned}
\end{equation}
where the ambiguity set $\mathcal{Q}$ is composed of all possible joint distributions of  $\xi$ and $\xi'$ with marginal distributions $\mathbb{P}_\xi$ and $\hat{\mathbb{P}}_
N$:
\begin{equation*}
\left.\mathcal{Q}=\left\{\mathbb{Q}\in\mathcal{M}(\Xi^2) \middle|~
\begin{array}{l}({\xi},{\xi'})\sim\mathbb{Q}, \displaystyle \int_{\Xi} \mathbb{Q}( \xi,{\rm d}{\xi}')=\mathbb{P}_{\xi}(\xi),
\\
\displaystyle \int_{\Xi} \mathbb{Q}( {\rm d}\xi,\hat{\xi}_i)
=\frac{1}{N}, ~  \forall i\in \mathbb{N}_{1:N}\end{array}\right.\right\}.  
\end{equation*}
This allows the variable $\mathbb{P}_{\xi}$ to be implicitly considered by optimizing over $\mathbb{Q}$. Then the semi-infinite program \eqref{eq:theo_ccp} can be reformulated as
\begin{align*}
&\sup_{\mathbb{Q} \in \mathcal{Q}}~\mathbb{E}_{\mathbb{Q}}\left\{
     \mathbb{1}_{{\rm cl}(\xi \in \mathcal{S})}
     - \kappa \lVert\xi - \xi' \rVert\right\}\\
    =& \sup_{\mathbb{Q} \in \mathcal{Q}}
    \int_{ \Xi^2} \left(\mathbb{1}_{{\rm cl}(\xi \in \mathcal{S})}- \kappa \lVert\xi - \xi' \rVert\right) \mathbb{Q}({\rm d}\xi,{\rm d}\xi')  \\
    =&\frac{1}{N} \sum_{i=1}^{N} \sup_{\mathbb{Q}_i \in \mathcal{M}(\Xi)}  \int_{ \Xi} \left[\mathbb{1}_{{\rm cl}(\xi \in \mathcal{S})}- \kappa \lVert\xi - \hat{\xi}_i \rVert\right] \mathbb{Q}_i({\rm d}\xi)\\
    =&\frac{1}{N} \sum_{i=1}^{N} \sup_{\xi \in \Xi}   \left[\mathbb{1}_{{\rm cl}(\xi \in \mathcal{S})}- \kappa\lVert\xi - \hat{\xi}_i \rVert\right] 
\end{align*}
where the second equality is due to the fact that the second marginal distribution of $\mathbb{Q}$ is the empirical distribution $\hat{\mathbb{P}}_N$ and $\{\mathbb{Q}_i\}_{i=1}^N$ are the conditional distributions of $\xi$ given $\xi' = \hat{\xi}_i$. Then one can consider 
each term in the summation over $\mathcal{S}$ and $\Xi \backslash \mathcal{S} $ respectively as
\begin{subequations}
\begin{align}
&\sup_{\xi \in \Xi}   \left[\mathbb{1}_{{\rm cl}(\xi \in \mathcal{S})}- \kappa \lVert\xi - \hat{\xi}_i \rVert\right] \label{eq_16a}\\
=&\max\left\{ 1 - \inf_{\xi \in\mathcal{S}} \kappa \lVert{\xi} - {\hat{\xi}_i} \rVert  ,~ - \inf_{\xi \in 
\Xi \backslash\mathcal{S} } \kappa \lVert{\xi} - {\hat{\xi}_i} \rVert \right\} \label{eq_16b} \\
=&\max\left\{ 1 - \kappa \inf_{\xi \in \mathcal{S}}  \lVert{\xi} - {\hat{\xi}_i} \rVert  ,~ 0 \right\},\label{eq_16c}
\end{align}
\end{subequations}
The equality between \eqref{eq_16b} and \eqref{eq_16c} can be derived by distinguishing the location ${\hat{\xi}_i}$ into two cases:
\begin{itemize}
    \item When $\hat{\xi}_i \in  \mathcal{S}$, the minimal distance from $\hat{\xi}_i$ to $\mathcal{S}$, thus $ 1 - \kappa \inf_{\xi \in  \mathcal{S}}  \lVert{\xi} - {\hat{\xi}_i} \rVert =1$. For given $\kappa \geq 0$, $- \inf_{\xi \in 
\Xi \backslash\mathcal{S} } \kappa \lVert{\xi} - {\hat{\xi}_i} \rVert \leq 0 $, implying that \eqref{eq_16b} $=1$.
    \item When $\hat{\xi}_i \in \Xi \backslash \mathcal{S} $, similarly, $\inf_{\xi \in 
\Xi \backslash\mathcal{S} } \kappa \lVert{\xi} - {\hat{\xi}_i} \rVert = 0 $, so \eqref{eq_16b} equals to $\max\left\{1 - \kappa \inf_{\xi \in \mathcal{S}}  \lVert{\xi} - {\hat{\xi}_i}\rVert   ,~ 0 \right\}$. 
\end{itemize}
Combining two cases, \eqref{eq_16b} simplifies to \eqref{eq_16c}. This yields \eqref{eq:lemma_rs_con} and completes the proof.
\end{proof}
\end{lemma}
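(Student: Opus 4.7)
The plan is to exploit the discrete structure of the empirical distribution $\hat{\mathbb{P}}_N$ to collapse the outer supremum over $\mathbb{P}_{\xi}$ and the implicit infimum inside $d_{\rm W}$ into a single optimization over couplings, and then decouple that coupling into $N$ independent sub-problems. First, set $\kappa = 1/\eta$ and expand $d_{\rm W}(\mathbb{P}_{\xi},\hat{\mathbb{P}}_N)$ by its definition as an infimum over joint distributions $\mathbb{Q}$ on $\Xi^2$ with marginals $\mathbb{P}_{\xi}$ and $\hat{\mathbb{P}}_N$. Since $\mathbb{P}_{\xi}$ is a free decision variable ranging over $\mathcal{M}(\Xi)$, the combined problem is equivalent to a single supremum over $\mathbb{Q}$ whose second marginal is constrained to equal $\hat{\mathbb{P}}_N$, with objective $\mathbb{E}_{\mathbb{Q}}[\mathbb{1}_{\xi \in \mathcal{S}} - \kappa\lVert \xi - \xi'\rVert]$.

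Second, because $\hat{\mathbb{P}}_N = \frac{1}{N}\sum_i \delta_{\hat\xi_i}$ is atomic, I would disintegrate $\mathbb{Q}$ into conditional distributions $\mathbb{Q}_i$ of $\xi$ given $\xi' = \hat\xi_i$, each with mass $1/N$. The problem then decouples into $N$ independent sub-problems
\begin{equation*}
\sup_{\mathbb{Q}_i \in \mathcal{M}(\Xi)} \int_{\Xi}\bigl[\mathbb{1}_{\xi \in \mathcal{S}} - \kappa \lVert \xi - \hat\xi_i\rVert\bigr]\,\mathbb{Q}_i({\rm d}\xi),
\end{equation*}
each of which is attained by a Dirac mass at the pointwise maximizer and hence equals $\sup_{\xi \in \Xi}[\mathbb{1}_{\xi \in \mathcal{S}} - \kappa\lVert \xi - \hat\xi_i\rVert]$.

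Third, to evaluate each pointwise supremum I split $\Xi = \mathcal{S} \cup (\Xi \setminus \mathcal{S})$. Over $\mathcal{S}$ the objective becomes $1 - \kappa\inf_{\xi\in\mathcal{S}}\lVert\xi - \hat\xi_i\rVert$, while over $\Xi\setminus\mathcal{S}$ it is bounded above by $0$ (and equals $0$ when $\hat\xi_i \notin \mathcal{S}$, or is dominated by the other branch when $\hat\xi_i \in \mathcal{S}$). Taking the max of the two branches yields $\max\{0,\,1 - \kappa\inf_{\xi\in\mathcal{S}}\lVert\xi - \hat\xi_i\rVert\}$. A standard epigraph reformulation — introducing $\bar v_i \geq 1 - \kappa\inf_{\xi\in\mathcal{S}}\lVert\xi-\hat\xi_i\rVert$ and $\bar v_i \geq 0$ and minimizing $\frac{1}{N}\sum_i \bar v_i$ — produces the claimed equivalent program.

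The main obstacle is justifying the decoupling step rigorously: one must argue that a sup of expectations under couplings with fixed discrete second marginal splits as a sum of independent sups over the conditionals and that each is achieved by a Dirac measure. This requires a measurable-selection or disintegration argument, together with measurability of $\mathbb{1}_{\xi\in\mathcal{S}}$ (guaranteed since $\mathcal{S}$ is closed, per the indicator-function convention adopted in the paper). The remaining case analysis for the pointwise supremum is routine but must handle $\hat\xi_i \in \mathcal{S}$ and $\hat\xi_i \notin \mathcal{S}$ separately before collapsing both into the uniform $\max\{0,\cdot\}$ form.
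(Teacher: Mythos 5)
Your proposal is correct and follows essentially the same route as the paper's proof: converting the penalized supremum into a single supremum over couplings with fixed empirical second marginal, decoupling into $N$ conditional sub-problems each solved by a Dirac mass, evaluating the pointwise supremum by the $\mathcal{S}$ versus $\Xi\setminus\mathcal{S}$ case analysis, and finishing with the epigraph reformulation. Your explicit flagging of the disintegration/measurable-selection issue is a point the paper glosses over by citing Hota et al., but it does not change the argument.
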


Lemma \ref{lemma:rs} reformulates the DRCC in \eqref{eq:RS_P} into semi-infinite constraints. This allows us to further arrive at semi-definite constraints in the following lemma.
\begin{lemma} \label{lemma:qp} 
When $\Xi=\mathbb{R}^{n_{\xi}}$ and  $\mathcal{S} = \{ \xi \mid  \lVert P \left(\xi +V \bar f\right) \rVert^2  \leq 1 \}$ , the optimal value $\inf_{\xi \in \mathcal{S}}  \lVert{\xi} - {\hat{\xi}} \rVert $ is equivalent to solve the following semi-definite program (SDP):
\begin{subequations} 
\begin{align}
    \max_{\pi \geq 0,p,u} &~ u\\
\begin{split} {\rm s.t.}~&~\begin{bmatrix}I&-\hat{\xi}\\-\hat{\xi}^\top&{\hat{\xi}}^\top{\hat{\xi}}-p\end{bmatrix} \\
& +
\pi\begin{bmatrix} \bar P & \bar P V \bar{f}\\\bar{f}^\top V^\top\bar P&\lVert V\bar{f}\rVert_{\bar P}^2-1\end{bmatrix}\succeq 0,~j\in \mathbb{N}_{1:N_j}\end{split} \label{eq_lemma21}\\
&~\begin{bmatrix}p&u\\u&1 \end{bmatrix}\succeq 0 \label{eq_lemma22}
\end{align}\label{eq_lemma2}
\end{subequations}
\end{lemma}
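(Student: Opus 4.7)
The plan is to rewrite the distance problem as a quadratically constrained quadratic program (QCQP), derive its Lagrangian/SDP dual via the S-procedure, and finally express the square-root of the optimal value through a Schur-complement trick that gives exactly the $2\times2$ LMI in \eqref{eq_lemma22}.

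First I would work with the squared distance $p^{\star} := \inf_{\xi\in\mathcal{S}} \|\xi-\hat\xi\|^2$, which is a QCQP with one quadratic constraint. Introducing the lifted vector $z = [\xi^\top\ 1]^\top$, I would write
\begin{equation*}
\|\xi-\hat\xi\|^{2} = z^{\top} M_{0}\, z, \qquad \|P(\xi+V\bar f)\|^{2}-1 = z^{\top} M_{1}\, z,
\end{equation*}
where $M_{0} = \begin{bmatrix} I & -\hat\xi \\ -\hat\xi^{\top} & \hat\xi^{\top}\hat\xi \end{bmatrix}$ and $M_{1} = \begin{bmatrix} \bar P & \bar P V\bar f \\ \bar f^{\top} V^{\top}\bar P & \|V\bar f\|_{\bar P}^{2}-1 \end{bmatrix}$. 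Computing $p^{\star}$ then amounts to finding the largest $p$ such that the implication $z^{\top} M_{1} z \leq 0 \Rightarrow z^{\top}(M_{0} - \mathrm{diag}(0,p)) z \geq 0$ holds for all lifted $z$ with last entry equal to $1$.

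Next I would invoke the S-procedure, which is lossless for a single quadratic constraint, to replace the implication by the LMI: there exists $\pi\geq 0$ such that $M_{0} - \mathrm{diag}(0,p) + \pi M_{1} \succeq 0$. This is precisely \eqref{eq_lemma21}. Maximising $p$ over $(\pi,p)$ subject to this LMI therefore yields $p^{\star}$ exactly, provided the S-procedure is tight; I would justify tightness by a Slater-type condition (the interior of $\mathcal{S}$ being non-empty, which holds generically for the fault-detectability setting), and note that when $\mathcal{S}=\emptyset$ the primal value is $+\infty$ and the SDP is unbounded, consistent with the claim.

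Finally I would bridge $p$ and $u=\sqrt{p}$ by the Schur-complement fact that $\begin{bmatrix} p & u \\ u & 1 \end{bmatrix}\succeq 0 \iff u^{2}\leq p$, so that maximising $u$ jointly with $(\pi,p)$ produces $u^{\star} = \sqrt{p^{\star}} = \inf_{\xi\in\mathcal{S}}\|\xi-\hat\xi\|$, matching the stated SDP \eqref{eq_lemma2}. The main obstacle I anticipate is establishing the losslessness of the S-procedure cleanly: one must argue that the optimal $z$ in the SDP relaxation can be taken to have a non-zero last component (so it corresponds to a genuine $\xi$ rather than a direction-at-infinity), which follows from Slater's condition applied to the QCQP together with the standard single-constraint S-procedure theorem; the remaining steps are essentially routine manipulations of quadratic forms and Schur complements.
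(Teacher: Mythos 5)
Your proposal is correct and follows essentially the same route as the paper's proof: both introduce the auxiliary scalars $p$ and $u$ for the squared and unsquared distances, apply the (lossless, single-constraint) S-procedure to the quadratic implication to obtain \eqref{eq_lemma21}, and encode $u^2 \leq p$ via the Schur-complement LMI \eqref{eq_lemma22}. Your additional care about tightness is sound but unproblematic here, since $\xi = -V\bar f$ strictly satisfies $\lVert P(\xi + V\bar f)\rVert^2 < 1$, so $\mathcal{S}$ is nonempty and Slater's condition holds automatically.
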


\begin{proof} $\inf_{\xi \in  \mathcal{S}}  \lVert{\xi} - {\hat{\xi}} \rVert $ is equal to that of the following semi-infinite optimization problem:
    \begin{subequations}
       \begin{align}
        \max_{p,u}~&   u\\
         {\rm s.t.}        &~ u^2 \leq p\label{eq_lemma2_1}\\
         \begin{split} 
         &(\xi - \hat{\xi})^\top (\xi - \hat{\xi}) \geq p  , \\
         &\quad   \quad \quad  \forall \xi~ {\rm s.t.} ~ \left(\xi +V\bar{f}\right)^\top \bar P \left(\xi +V\bar{f}\right)   \leq 1 
          \end{split} \label{eq_lemma2_2}
        \end{align}
 \end{subequations}
where the auxiliary scalars $p$ and $u$ are introduced to denote $\inf_{\xi \in   \mathcal{S}} \lVert{\xi} - {\hat{\xi}} \rVert^2$ and $\inf_{\xi \in    \mathcal{S}} \lVert{\xi} - {\hat{\xi}} \rVert$, respectively. This enables the application of the S-procedure \citep{polik2007survey}, which transforms \eqref{eq_lemma2_2} to \eqref{eq_lemma21}. In addition, \eqref{eq_lemma2_1} essentially describes a second-order cone, which can be reformulated as a linear matrix inequality (LMI) \eqref{eq_lemma22}. Combining these two aspects completes the proof. 
\end{proof}
\begin{remark} \label{rmk_boundsup}
Lemma \ref{lemma:qp} applies to the case unbounded support $\Xi=\mathbb{R}^{n_\xi}$. Indeed, our framework can also be extended to the Wasserstein ambiguity set with a bounded support set $ \Xi=\big\{\xi \mid (\xi-c_j)^\top \Omega_j (\xi-c_j) \leq1,~\forall j \in\mathbb{N}_{1:N_j}  \big\}$ with all $\Omega_j \succ0$ in a similar spirit. We assume all samples  $ \hat \xi_i \in \Xi$ and define $\mathcal{S}'=\mathcal{S} \cap \Xi$, by applying S-procedure \citep{polik2007survey}, the inner approximation of $\inf_{\xi \in \mathcal{S}'}  \lVert{\xi} - {\hat{\xi}} \rVert $ is given by:
\begin{equation} 
\begin{aligned}
\begin{split} 
    \max_{\pi \geq 0,\omega \geq 0,p,u} &~ u\\
{\rm s.t.}~&~\begin{bmatrix}I&-\hat{\xi}\\-\hat{\xi}^\top&{\hat{\xi}}^\top{\hat{\xi}}-p\end{bmatrix} +\sum^{N_j}_{j=1} \omega_j \begin{bmatrix} \Omega_j &\Omega_j c_j\\ c_j^\top\Omega_j& c_j^\top\Omega_j c_j-1
\end{bmatrix}\\
& \qquad +
\pi\begin{bmatrix} \bar P & \bar P V \bar{f}\\\bar{f}^\top V^\top \bar P&\lVert V\bar{f}\rVert_{\bar P}^2-1\end{bmatrix}\succeq 0 \\
&~\begin{bmatrix}p&u\\u&1 \end{bmatrix}\succeq 0
\end{split}
\end{aligned} \label{eq_remark_bound}
\end{equation}
By replacing \eqref{eq_lemma2} with \eqref{eq_remark_bound}, one can incorporate support information $\Xi$, thus improving the solution at the expense of increasing computational burden from the additional decision variables $\left\{\omega_j\right\}^{N_j}_{j=1}$.
\end{remark}

Now we are ready to address the DRFD problem \eqref{eq:RS_P}, by establishing the following result.

\begin{theorem} \label{th_1}
For given support set $\Xi= \mathbb{R}^{n_\xi}$, the DRO problem \eqref{eq:RS_P} admits the following exact reformulation:
{\allowdisplaybreaks 
\begin{subequations}
    \begin{align}
    \max_{\begin{subarray}{c}\bar{P},\eta,\lambda,y,v,\\ \tau,\pi,t,u,p,q\end{subarray}} &~ \rho\left({\bar{P}^{1/2}}\right) + \gamma \eta \\
    {\rm s.t. } &~\theta + \frac{1}{N} \sum^{N}_{i=1}y_i \leq \lambda \alpha  \label{eq_20b} \\
&~y_i \geq 0 ,~ i\in\mathbb{N}_{1:N}\\
&~y_i \geq \lambda  - t_i ,~ i\in\mathbb{N}_{1:N}\\
\begin{split} \label{eq_FAR_e}
    &~\begin{bmatrix}I&-\hat{\xi}_i\\-\hat{\xi}_i^\top&{\hat{\xi}}_i^\top\mathbf{\hat{\xi}}_i-q_i\end{bmatrix}\\
    &~-\tau_i\begin{bmatrix} \bar{P} &0\\0&-1\end{bmatrix}\succeq 0,~ i\in\mathbb{N}_{1:N} 
\end{split}\\
&~\begin{bmatrix}q_i&t_i\\t_i&1\end{bmatrix}\succeq 0,~i\in\mathbb{N}_{1:N} \label{eq_20f}  \\
&\frac{1}{N} \sum^{N}_{i=1}v_i \leq (1- \beta) \eta  \label{eq_20g} \\
&~v_i \geq 0 ,~ i\in\mathbb{N}_{1:N}\\
&~v_i \geq \eta -  u_i ,~ i\in\mathbb{N}_{1:N}\\
\begin{split}
&~\pi_i\begin{bmatrix} \bar{P} & \bar{P}^\top V\bar{f}\\
\bar{f}^\top V^\top \bar{P}&\bar{f}^\top V^\top \bar{P} V\bar{f}-1\end{bmatrix}\\
&+\begin{bmatrix}I&-\hat{\xi}_i\\-\hat{\xi}_i^\top&{\hat{\xi}}_i^\top\mathbf{\hat{\xi}}_i-p_i\end{bmatrix}
 \succeq 0,~ i\in\mathbb{N}_{1:N} 
\end{split} \label{eq_20k}\\
&~\begin{bmatrix}p_i&u_i\\u_i&1\end{bmatrix}\succeq 0,~ i\in\mathbb{N}_{1:N} \label{eq_20l}  \\
&~\eta \geq 0,~ \lambda \geq 0,~ \bar{P} \succeq 0\\
&~\tau_i \geq 0,~ \pi_i \geq 0,~ i\in\mathbb{N}_{1:N} 
\end{align} \label{eq:theo1}
\end{subequations}}

\begin{proof}
By applying Lemmas \ref{lemma:rs} and \ref{lemma:qp} and introducing new decision variables by $v_i: = \bar  v_i\eta$ to eliminate as many bilinear terms in \eqref{eq:lemma_rs_con} as possible, we can obtain the equivalent reformulations \eqref{eq_20g}-\eqref{eq_20l} to the DRCC \eqref{eq:qp2} on FDR. Finally, the DRCC \eqref{eq:qp1} on FAR can be reformulated as \eqref{eq_20b}-\eqref{eq_20f}  in virtue of \cite[Theorem 5]{shang2021distributionally}.
\end{proof}
\end{theorem}

In addition, to reduce the conservatism, the bounded support information of $\Xi$ can also be integrated into the DRCC \eqref{eq:qp1} on FAR. A reformulation can be derived similar to that in Remark \ref{rmk_boundsup}.
Despite the exactness of Theorem \ref{th_1}, the reformulation \eqref{eq:theo1} is NP-hard and intractable due to the presence of the BMIs. Thus, a tailored solution algorithm is necessary. Notice that when the Lagrange multipliers $\{\tau_i\}^N_{i=1}$ and $\{\pi_i\}^N_{i=1}$ are fixed, \eqref{eq:mainproblem} becomes an SDP with decision variables $\bar{P},\eta,\lambda$ and $\{y_i,t_i,q_i,v_i,u_i,p_i\}^N_{i=1}$. Importantly, SDPs can be modularly solved by interior-point solvers \citep{toh2018some} such as MOSEK \citep{mosek}, SDPT3 \citep{toh1999sdpt3} and CSDP \citep{borchers1999csdp}. This motivates the development of a heuristic algorithm based on efficient sequential minimization \citep{wang2018sequential,dinh2011combining} to address \eqref{eq:theo1}. As a result, \eqref{eq:theo1} is first expressed as several subproblems:
\begin{equation}\label{eq:mainproblem}
\begin{split} 
\max_{\bar{P} \succeq 0, \eta > 0} &~ \rho\left({\bar{P}^{1/2}}\right) + \gamma \eta \\
{\rm s.t.}&~ \mathcal{J}_1(\bar{P}) \leq 0,~ \mathcal{J}_2(\bar{P},\eta) \leq 0
\end{split}
\end{equation}
where $\mathcal{J}_1(\bar{P})$ is the optimal value of the subproblem:
\begin{equation}
\begin{split}
\min_{\lambda, y, \tau, t, q}&~ \theta + \frac{1}{N} \sum^{N}_{i=1}y_i - \lambda \alpha \\
{\rm s.t.}~~&~y_i \geq 0 ,~y_i \geq \lambda  - t_i,~\tau_i\geq0,~ i\in\mathbb{N}_{1:N}\\
&~\begin{bmatrix}I&-\hat{\xi}_i\\-\hat{\xi}_i^\top&{\hat{\xi}}_i^\top\mathbf{\hat{\xi}}_i-q_i\end{bmatrix} -\tau_i\begin{bmatrix} \bar{P} &0\\0&-1\end{bmatrix}\succeq 0,\\
&~ \qquad \qquad \qquad \qquad \qquad \qquad  i\in\mathbb{N}_{1:N} \\
&~\begin{bmatrix}q_i&t_i\\t_i&1\end{bmatrix}\succeq 0,~ i\in\mathbb{N}_{1:N},~\lambda \geq 0
\end{split}\label{eq_J1}
\end{equation}
and $ \mathcal{J}_2(\bar{P},\eta)$ is the optimal value of the subproblem:
\begin{equation}
\begin{split}
\min_{v, \pi, u, p}&~ \frac{1}{N} \sum^{N}_{i=1}v_i - (1-\beta) \eta \\
{\rm s.t.}~&~v_i \geq 0 ,~v_i \geq \eta -  u_i ,~\pi_i\geq0,~ i\in\mathbb{N}_{1:N}\\
&~\pi_i\begin{bmatrix} \bar{P} & \bar{P} V\bar{f}\\
\bar{f}^\top V^\top \bar{P}&\lVert Vf\rVert ^2_{\bar{P}}-1\end{bmatrix}\\
&~ \qquad \quad ~ +\begin{bmatrix}I&-\hat{\xi}_i\\-\hat{\xi}_i^\top&{\hat{\xi}}_i^\top\mathbf{\hat{\xi}}_i-p_i\end{bmatrix}\succeq 0,~i\in\mathbb{N}_{1:N} \\
&~\begin{bmatrix}p_i&u_i\\u_i&1\end{bmatrix}\succeq 0,~ i\in\mathbb{N}_{1:N} 
\end{split}\label{eq_J2}
\end{equation}

Then, the minimization can be performed, whose idea is to iterate between updating the Lagrange multipliers and maximizing the objective. First, the multipliers $\{\tau_i,\pi_i\}_{i=1}^{N}$ are updated by solving the SDPs \eqref{eq_J1} and \eqref{eq_J2} while holding $\bar{P}$ and $\eta$ constant. Because any pair of feasible $\{ \bar P ,\eta\}$ in the previous iteration remains feasible, updating the multipliers $\{\tau_i,\pi_i\}_{i=1}^{N}$ helps to enlarge the feasible region.
Subsequently, with the updated multipliers $\{\tau_i,\pi_i\}_{i=1}^{N}$ fixed, the SDP \eqref{eq:mainproblem} is solved to find a higher objective function. This iterative process is repeated until convergence.
However, as an approximated solution strategy,  sequential minimization is essentially a coordinate descent method, which may produce a local optimal solution and does not guarantee global optimality. Therefore, it is crucial to initialize the algorithm with a good feasible solution  $\{\bar{P}^{(0)},\eta ^{(0)}\}$. First, it is recommended to choose $\eta ^{(0)}$ sufficiently small to fulfill \eqref{eq:qp2} in the best possible way. Second, to decide $\bar{P} ^{(0)}$, we aim at maximizing its sensitivity to the direction of $\bar{f}$ in the objective subject to the DRCC of FAR. We propose to solve the following problem to initialize \eqref{eq:theo1}:
\begin{equation} 
\begin{aligned}
\max_{\bar{P}^{(0)}} &~ \lVert V \bar{f}  \rVert_ {\bar{P}^{(0)}}^2\\
{\rm s.t.} &~ \sup_{\mathbb{P}_{\xi} \in \mathcal{D}_{\rm W}(\theta, N)}\mathbb{P}_{\xi} \{ \lVert\xi \rVert^2_{\bar{P}^{(0)}}  > 1\} \leq \alpha
\end{aligned} \label{eq_initial}
\end{equation}
Fortunately, \eqref{eq_initial} is easier to solve than \eqref{eq:theo1} and can be effectively solved in virtue of \cite[Algorithm 1]{shang2021distributionally}. After obtaining $\bar{P}^{(0)}$, we inspect whether its FDR under the empirical distribution is higher than $\beta$. If so, the pair $\{\bar{P}^{(0)},\eta^{(0)}=0\}$ must be a feasible solution to \eqref{eq:theo1} and thus the sequential minimization can be performed, as detailed in Algorithm \ref{alg1}. Otherwise, the initial solution $\bar{P}^{(0)}$ to \eqref{eq:theo1} is infeasible and thus a smaller $\beta$ is suggested to improve the feasibility of the problem \eqref{eq:theo1}. In addition, a guideline is provided to inform the selection of all hyperparameters $\{\alpha, \theta,\beta,\gamma \}$ for users.
\begin{itemize}
\item $\alpha$ is the maximal tolerance of FAR that shall be preselected according to user preference. In principle, a high value $\alpha$ may result in ``alarm flood". Conversely, a low value of $\alpha$ can give rise to poor detectability.
\item $\theta$ is the Wasserstein radius, regulating the distributional robustness of the constraint on FAR. A large $\theta$ enhances robustness, but may lead to a more conservative design with potentially lower fault detectability. To calibrate $\theta$, the $K$-fold cross-validation is a viable option, as summarized in Algorithm \ref{alg2}. 
\item $\beta$ is the minimal FDR under the nominal distribution $\hat{\mathbb{P}}_N$ and thus shall be chosen by the user. It should be noted that the FDR is not always kept below $\beta$ when the true distribution $\mathbb{P}^*_\xi
$ deviates from $\hat{\mathbb{P}}_N$. Thus, the value of $\beta$ shall not be too high in order to ensure robustness and avoid infeasibility of \eqref{eq: 5}. 
\item $\gamma$ is a trade-off parameter that can be flexibly adjusted to balance between the overall detectability and the robustness of detecting the particular fault $\bar{f}$. A useful tuning heuristic is to inspect the optimal value of $\eta$ \textit{a posteriori}. If $\eta^*$ is too small, then there is a lack of distributional robustness of detecting $\bar{f}$ and thus one shall turn to a larger $\gamma$ or a smaller $\beta$.
\end{itemize}
\begin{algorithm}
\caption{Solution algorithm for reformulation of new DRFD \eqref{eq:mainproblem}}
\label{alg1}
\begin{algorithmic}[1]
\Require Coefficient matrix $V$, prior fault $\bar{f}$, additive disturbance samples $\{\hat{\xi}_i\}_{i=1}^N$, Wasserstein radius $\theta$, tolerable FAR $\alpha$ and desired FDR $\beta$.  
    \State Solve \eqref{eq_initial} via \cite[Algorithm 1]{shang2021distributionally} and obtain ${\bar{P}^{(0){*}}}$.
 \State Compute ${\rm FDR}_{\rm emp} \leftarrow
 \sum_{i=1}^{N}\mathbb{1}_{{\rm cl}(\lVert \xi_i + V\bar{f}\rVert_ {\bar{P}^{*}}>1)}/{N}$.\label{alg1_2}
\If{$\rm FDR_{\rm emp} < \beta$} \label{alg1_3}
    \State  \textbf{Return} infeasibility. \label{alg1_4}
    \Else
    \State Initialize $\bar{P}^{(0)} \leftarrow  {\bar{P}^{(0)}{*}}$, $\eta^{(0)}\leftarrow 0$ and $n_{\rm ite}\leftarrow 0$.
\While{Not convergent}
    \State Solve \eqref{eq_J1} with $\bar{P} \leftarrow \bar{P}^{(n_{\rm ite})}$ and obtain $\tau^{(n_{\rm ite})}$.
    \State Solve \eqref{eq_J2} with $\bar{P}  \leftarrow \bar{P}^{(n_{\rm ite})}$ and $\eta  \leftarrow \eta^{(n_{\rm ite})}$, and obtain $\pi^{(n_{\rm ite})}$.
    \State Solve \eqref{eq:mainproblem} with $\tau \leftarrow \tau^{({\rm ite})}$, $\pi \leftarrow \pi^{({\rm ite})}$ and obtain the solution $\bar{{P}}^{(n_{\rm ite}+1)}$ and  $\eta^{(n_{\rm ite}+1)}$ .
    \State $n_{\rm ite} \leftarrow n_{\rm ite} + 1$.
\EndWhile
\State \textbf{Return} $\bar{P}$ and $\eta$.
\EndIf 
\end{algorithmic}
\end{algorithm}

\begin{algorithm}
\caption{Cross-validation for selecting the radius $\theta$}
\label{alg2}
\begin{algorithmic}[1]
\Require Number of folds $K$, samples $\{\hat{\xi}_i\}_{i=1}^N$, grid $\Theta$ of Wasserstein radius, and tolerable FAR $\alpha$.
\State Partition the samples  $\{\hat{\xi}_i\}_{i=1}^N$ into $K$ equally sized folds.
\For{Each candidate value $\theta \in \Theta$}
\For{$k=1:K$}
\State Use the $k$th fold of samples as a test dataset, merge the remaining ones to construct $\mathcal{D}_{\rm W}({\theta};(K-1)N/K)$ and then solve \eqref{eq_DRFD}.
\State Compute ${\rm FAR}_k$ on the $k$th fold.
\EndFor
\State Calculate ${\rm FAR}({\theta}) \leftarrow \sum_{k=1}^{K}{\rm FAR}_k/K$.
\EndFor
\State Find $\theta^* \leftarrow  \displaystyle \arg \max_{{{\theta}}\in \Theta}~ {{\theta}} ~~~
{\rm s.t.}~{\rm FAR}({\theta}) \leq \alpha
$.
\State \textbf{Return} $\theta^*$.
\end{algorithmic}
\end{algorithm}

\section{Case Study on A Simulated Three-tank System}\label{simu1}
First, the efficacy of the proposed DRFD trade-off design scheme is verified on a simulated three-tank system, which has been widely used as a benchmark in the field of FD \citep{ding2008model,shang2021distributionally,shang2022generalized,xue2022integrated,ding2014data,ding2014dsubspace,ding2019application}. As shown in Fig. \ref{3tank}, three water tanks are interconnected with pipes, and two pumps feed water to the tanks. The system input $u(k)=\left[
    u_1~u_2\right]^\top \in \mathbb{R}^2$ is the flow rates of pumps $1$ and $2$, the system states $x(k)=\left[  x_1~x_2~x_3\right]^\top \in \mathbb{R}^3$ is the levels of three tanks, and the outputs $y(k)=\left[
    x_1~x_3\right]^\top \in \mathbb{R}^2$ is the measurement of the levels of Tanks $1$ and $3$. 
By linearizing around the operating point $x_1 = 15$cm, $x_2 = 10$cm, $x_3 = 13$cm  with a sampling interval $\Delta x = 5s$, state-space matrices in  \eqref{eq_statepace} are derived as follows \citep{shang2021distributionally,ding2008model}:
\begin{align*}
&A=\begin{bmatrix}0.8945&0.0048&0.1005\\0.0048&0.8500&0.0801\\0.1005&0.0801&0.8164\end{bmatrix},\\
&B=\begin{bmatrix}0.0317&0.0001\\0.0001&0.0309\\0.0018&0.0014\end{bmatrix},\\
&B_d=I_3,~B_f=\begin{bmatrix}I_3&0_{3\times2}\end{bmatrix},~C=\begin{bmatrix}1&0&0\\0&0&1\end{bmatrix},\\
&D=0,~D_d=0,~D_f=\begin{bmatrix}0_{2\times3}&I_2\end{bmatrix}.
\end{align*}
In our simulation, the disturbance~$d$ is generated from a Laplace distribution with mean $\mu_d=0$ and covariance $\Sigma_d = 0.4I$. The motivation is to create a more challenging scenario that reflects real-world conditions. Unlike the Gaussian distribution, the Laplace distribution is heavy-tailed and can suitably describe the presence of large-magnitude outliers often encountered in practical systems \citep{yu2023robust,yu2025robust}.

\begin{figure}[htbp]
    \centering
    \includegraphics[width=0.95\linewidth]{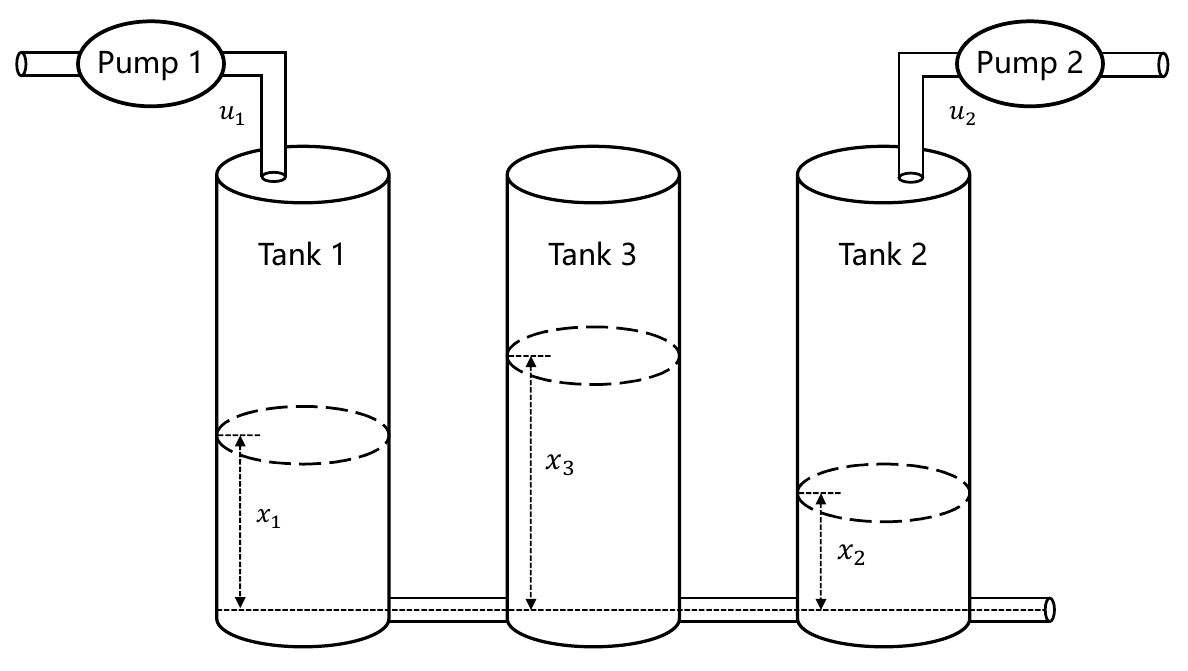}
    \caption{Schematic of the three-tank system.}
    \label{3tank}
\end{figure}

We seek to design the matrix $P$ in \eqref{eq:resi_s}, where the parity space method \citep{chow1984analytical} with order $s = 4$ is used to generate residuals $\{\hat{\xi}_i\}_{i=1}^N$ as samples of uncertainty $\xi$ and attain the matrix $V$. The true distribution $\mathbb{P}^*_\xi$ of uncertainty is assumed to be unknown, but $N=200$ data samples are collected under routine fault-free conditions to capture the inexactness of the true distribution $\mathbb{P}^*_\xi$ using the Wasserstein ambiguity set. We set $\alpha=1\%$ and calibrate the radius $\theta$ of the ambiguity set $\mathcal{D}\left(\theta; N \right)$ using five-fold cross-validation. FARs under different radii within $[10^{-3}, 10^{-2}]$ are shown in Fig. \ref{radius}. Based on this, $\theta_1=0.008$ and $\theta_2=0.009$ are set respectively for $\rho_1(\cdot)$ in \eqref{eq_rho1} and  $\rho_2(\cdot)$ in \eqref{eq_rho2} to ensure the average FAR below the tolerance level $\alpha$.
In the design phase, it is assumed that a single critical fault is given by $\bar{f} = \left[-2~0~0~0~0\right]$, which represents a constant leakage in Tank $1$. Using $\rho_i(\cdot)$ ($i=1,2$), the existing Wasserstein-based DRFD design based on \eqref{eq_DRFD} \citep{shang2021distributionally} and our proposed new DRFD design are denoted by DR$i$ and NDR$i$, respectively. In addition, we also introduce a benchmark, denoted by Init$i$, which uses only the initial solution obtained from \eqref{eq_initial}.
All induced SDPs are solved in MATLAB R2024a with a CPU of Intel Core i7-10700 using MOSEK \citep{mosek} via the YALMIP interface \citep{lofberg2004yalmip}. Our source code is available at https://github.com/Yulin-F/DRFD-prior.

\begin{figure}[htbp]
    \centering
    \includegraphics[width=0.95\linewidth]{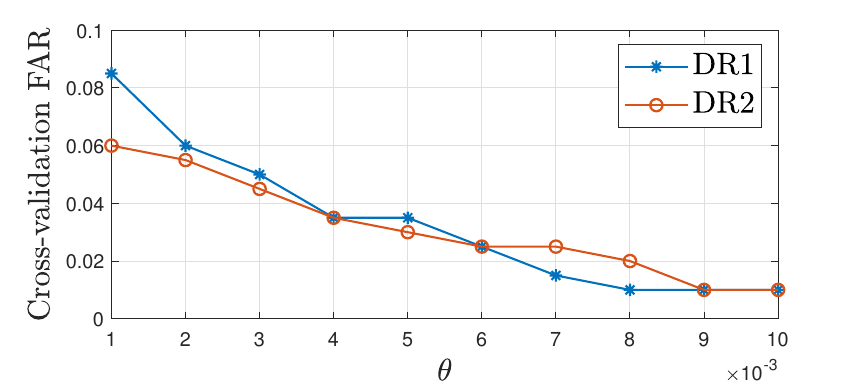}
    \caption{Cross-validation results under different Wasserstein radii and overall detectability}
    \label{radius}
\end{figure}

Setting different values of $\{\beta, \gamma\}$, trade-off curves between the overall detectability $\rho(\cdot)$ and the distributional robustness index $\eta$ of detecting $\bar{f}$ are derived and displayed in Figs. \ref{fig_rho1} and \ref{fig_rho2}. It can be seen that for a given $\beta$, the proposed approach enables to flexibly balance between two design objectives by varying $\gamma > 0$. With $\gamma$ increasing, the distributional robustness of detecting $\bar{f}$ is enhanced at the cost of lower values of $\rho(\cdot)$. Besides, when $\beta = 0.99$, an extremely small $\eta$ is derived, indicating a lack of distributional robustness of detecting $\bar{f}$. This indicates that $\beta$ shall not be chosen too large.
\begin{figure}[htbp]
    \centering    \includegraphics[width=0.9\linewidth]{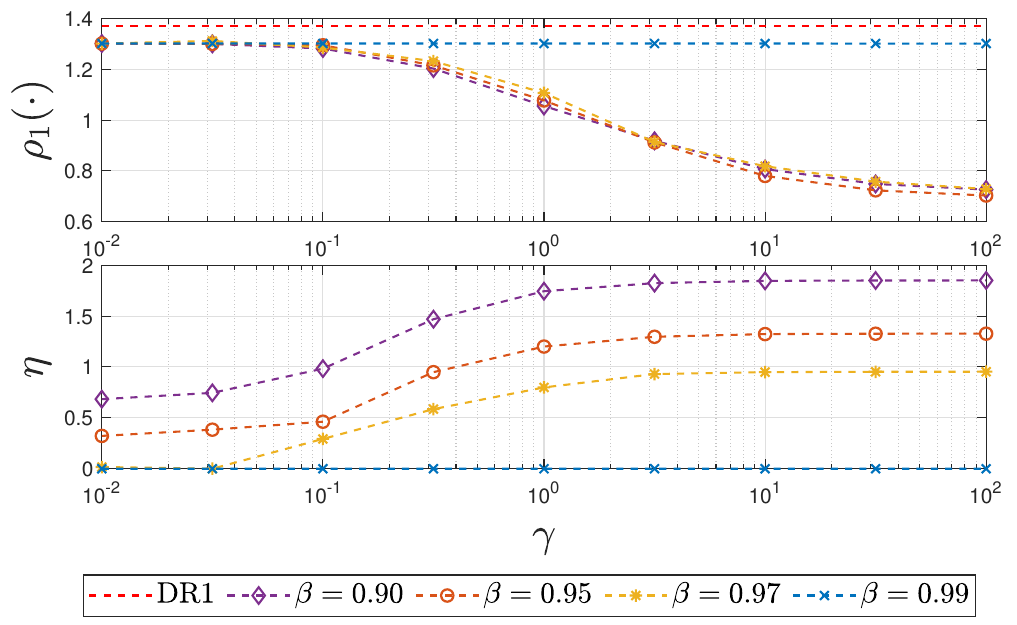}
    \caption{Trade-off between $\rho(\cdot)$ and $\eta$ in NDR$1$}    \label{fig_rho1}
\end{figure}

\begin{figure}[htbp]
    \centering    \includegraphics[width=0.9\linewidth]{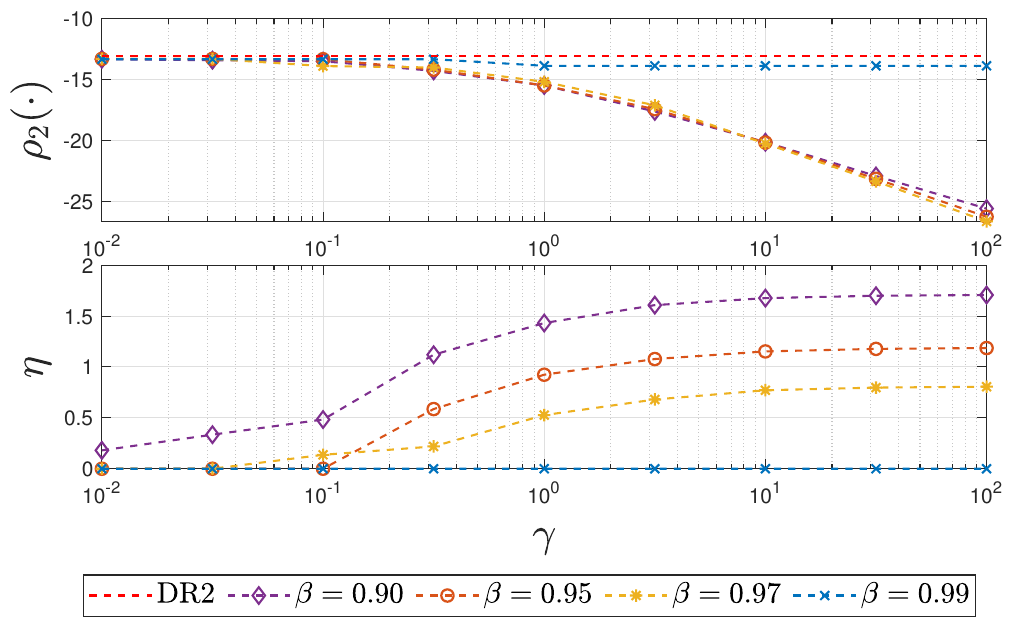}
    \caption{Trade-off between $\rho(\cdot)$ and $\eta$ in NDR$2$}    \label{fig_rho2}
\end{figure}

Monte Carlo simulations are carried out to evaluate the performance of the DRFD designs under fault-free and faulty conditions. We consider the detection of two faults, i.e. $f_1=\bar{f}$ and $f_2=\left[0~5~10~0~0\right]$, the latter of which represents an unknown constant leakage in Tanks 2 and 3. In each Monte Carlo run, an input-output trajectory with $800$ samples is generated and the fault is injected from the $401$st sample. 
Five different choices of $\beta$ and $\gamma$ are used, giving rise to various residual generators, namely NDR$i$-A ($\beta = 95\%$, $\gamma=0.01$), NDR$i$-B ($\beta = 95\%$, $\gamma=0.1$),  NDR$i$-C ($\beta = 95\%$, $\gamma=1$),  NDR$i$-D ($\beta = 97\%$, $\gamma=1$) and NDR$i$-E ($\beta = 99\%$, $\gamma=1$). The average FARs and FDRs computed from $250$ Monte Carlo simulations are given in Table \ref{tab_FAR}, where FDR$i$ indicates the FDR of the fault profile $f_i$, $i=1,2$. In addition, Fig. \ref{fig_residual} shows the profile of the evaluation function $J(r)$ in a single Monte Carlo run. It can be seen that the FARs of all DRFD designs are below the maximal tolerance $\alpha$. For generic DRFD designs DR$1$ and DR$2$, their FDR$1$ are not sufficiently high. The initial solution $\bar{P}^{(0)}$ exhibits a high FDR$1$, but almost completely fails to detect the unknown faults $f_2$. By contrast, all values of FDR$1$ induced by NDR$i$ are higher than $95\%$ while ensuring that FDR$2$ remains at a high level, thereby demonstrating the necessity of Algorithm \ref{alg1}. This also highlights that a remarkably improved detectability against the known fault $f_1$ is enabled by encoding critical fault information. It is also worth adding that considering information of a critical fault does not always lead to higher detectability against all possible faults. For example, the FDRs against $f_2$ of our proposed method are lower than those of generic DRFD, as shown in Table \ref{tab_FAR}. Despite this, a more balanced detection of $f_1$ and $f_2$ is achieved by the proposed DRFD design than the generic scheme in \eqref{eq_DRFD}. 

Next, we discuss the effect of $\gamma$ on the overall detectability and the detectability against the critical fault $\bar{f}$, by comparing the performance of NDR$i$-A, NDR$i$-B, and NDR$i$-C in Table \ref{tab_FAR}. With $\beta = 95\%$ fixed, increasing $\gamma$ results in larger values of $\eta$ and always helps to empirically improve the detectability of $f_1$ in this case. With $\gamma$ decreasing and the overall detectability $\rho$ increasing, the detectability against a particular fault $f_2$ increases, as indicated by ${\rm FDR2_{\rm NDR2-A}}>{\rm FDR2_{\rm NDR2-B}}>{\rm FDR2_{\rm NDR2-C}}$. However, we cannot expect FDR$2$ to behave strictly oppositely with $\gamma$. This is because $\rho$ is a geometric quantification of the overall detectability against all possible unknown faults, so a larger value of $\rho$ does not necessarily indicate higher detectability for $f_2$. As an example, ${\rm FDR2_{\rm NDR1-B}}$ is slightly higher than ${\rm FDR2_{\rm NDR1-A}}$.

The generic DRFD design has lower FDRs than NDR$i$ in all three faulty cases. First, the injection of prior fault information in NDR$i$ leads to an effective improvement of detectability for the known fault $f_1$. Meanwhile, the FDR$2$ and FDR$3$ also increase. A possible explanation is that due to the specific dynamics of the battery cell system, incorporating the prior information of $f_1$ can improve the detectability of a wide range of authentic faults including $f_2$ and $f_3$.
In addition, NDR$1$ excels in detecting $f_1$ and whereas NDR$2$ is superior for $f_2$. This can be explained by the fact that $f_2$ lies closer to the kernel of $V$, so the balanced detectability promoted by $\rho_2(\cdot)$ is more effective, which aligns with the discussion in Remark \ref{remark_rho}.

Then, we discuss the role of $\beta $, which is a target of the FDR for the known fault $f_1$ under the empirical distribution $\hat{\mathbb{P}}_N$. When $\beta$ is chosen as $95\%$ or $97\% $, a superior FDR$1$ is obtained in NDR$i$-C and NDR$i$-D, $i=1,2$. However, a reduction of FDR$1$ can be observed using $\beta=99\%$ in NDR$i$-E. This is because an overly high $\beta$ may compromise the distributional robustness $\eta$ against detecting $f_1$, which leads to a lower detectability under the unknown realistic distribution.

Finally, the convergence performance of Algorithm \ref{alg1} is depicted in Fig. \ref{conv}, which shows that the convergence condition is met only within $5$ iterations. Furthermore, the relationship between solution time and sample $N$ with fixed $\theta$ is presented in Fig. \ref{fig_time}.  The solution time appears to grow exponentially with the sample number $N$. For applications with large datasets, the significant computational burden motivates the use of scenario reduction to improve solution efficiency by selecting a representative subset of samples \citep{rujeerapaiboon2022scenario,NEURIPS2022_a8e0abdd}. In addition, NDR$1$ generally is computationally more efficient than NDR$2$, because $\rho_1(\cdot)$ is linear in $\bar P$, resulting in a more tractable problem. 

\begin{table*}[htpb] 
\centering
\caption{Average FAR and FDR Performance on the Simulated Three-Tank Process}\label{tab_FAR}
\begin{tabular}{lcccccccccccc}
\toprule
&Init$1$& DR$1$ & NDR$1$-A & NDR$1$-B& NDR$1$-C& NDR$1$-D& NDR$1$-E \\
\midrule
FAR (\%)&0.12 &0.27&0.36&0.38&0.37&0.38&0.36\\
FDR$1$ (\%)&98.89&3.52&96.34&97.05&99.10&99.06 &96.22\\
FDR$2$ (\%)& 12.01&96.17&95.92&96.69&75.87&74.95&95.91\\
\midrule
&Init$2$& DR$2$ & NDR$2$-A & NDR$2$-B& NDR$2$-C& NDR$2$-D& NDR$2$-E \\
\midrule
FAR (\%)& 0.09 &0.19&0.24&0.24&0.26&0.27&0.24\\
FDR$1$ (\%)& 98.60&65.07&96.91&97.01&98.54&98.43&96.90\\
FDR$2$ (\%)& 10.16&97.29&96.15&95.66&85.26&89.03&96.17\\
\bottomrule
\end{tabular}
\end{table*}

\begin{figure*}[htbp]   
    \centering    
        \subfloat[FD Designs using overall detectability metric $\rho_1(\cdot)$]{
        \includegraphics[width=0.9\linewidth]{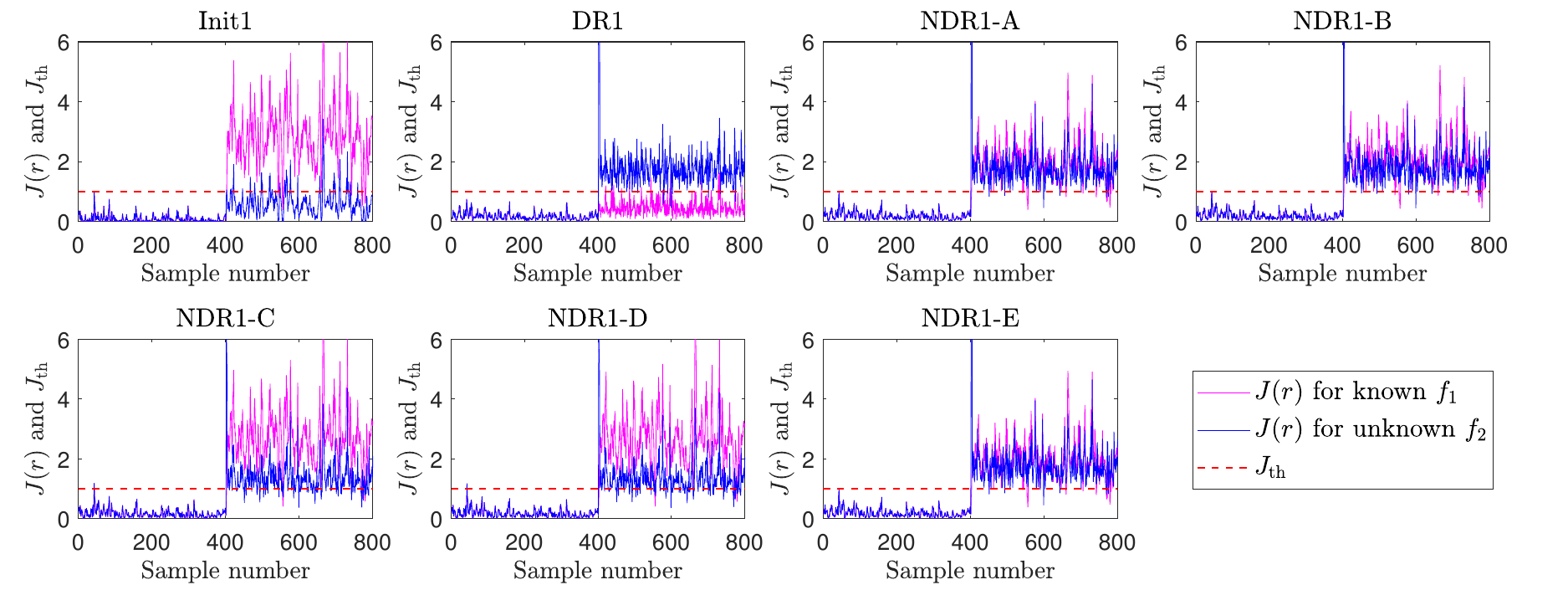} 
        \label{fig_res1}
    }\hspace{0.5cm}
              \subfloat[FD Designs using overall detectability metric $\rho_2(\cdot)$]{
        \includegraphics[width=0.9\linewidth]{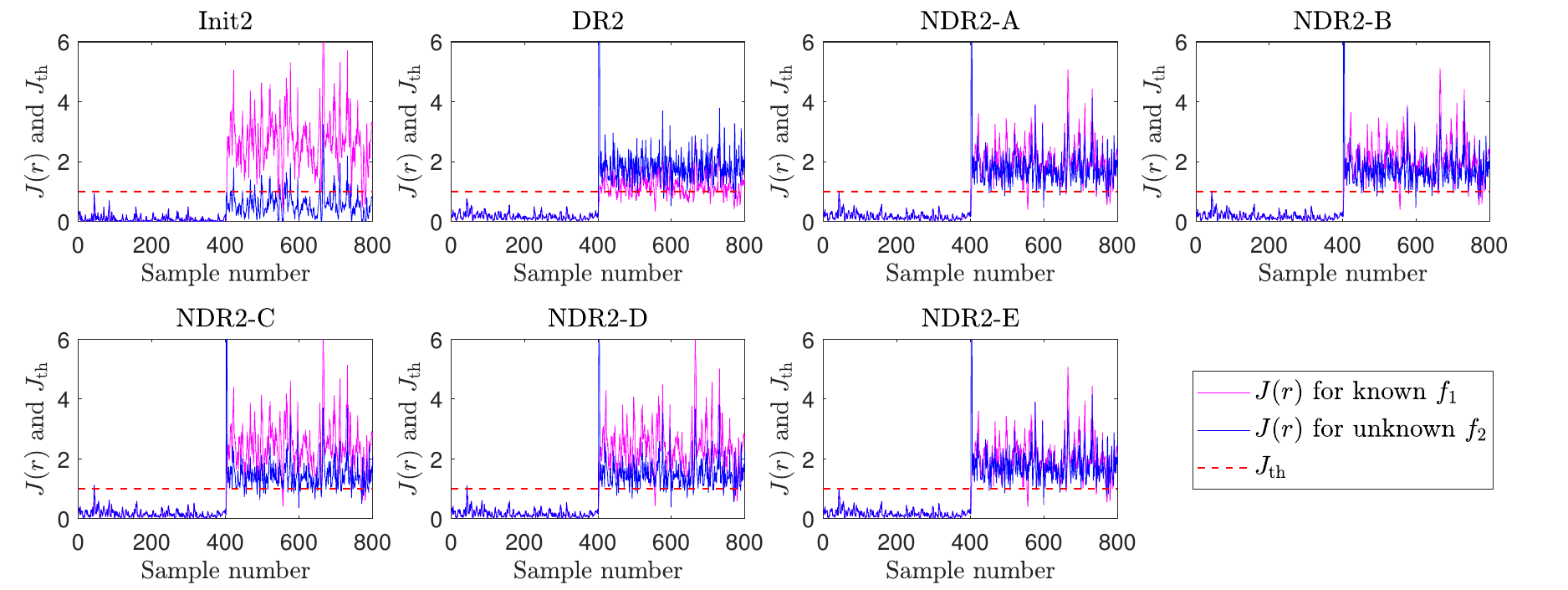} 
        \label{fig_res2}
    }
    \caption{FD results on the simulated three-tank system}
    \label{fig_residual}
\end{figure*}

\begin{figure}[htbp]
    \centering
    \includegraphics[width=0.95\linewidth]{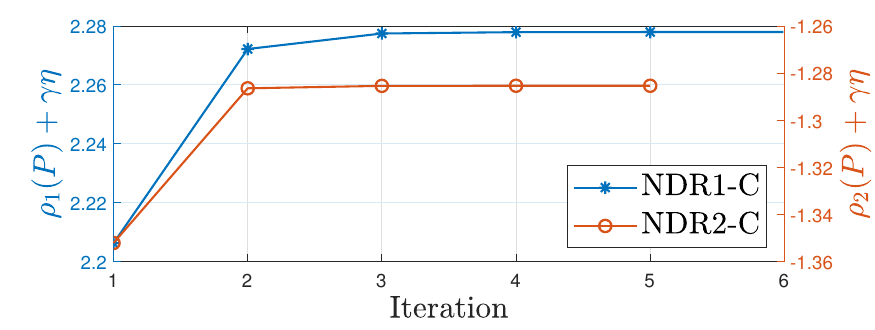}
        \caption{The objective function iteration of Algorithm \ref{alg1}}
    \label{conv}
\end{figure}

\begin{figure}[htbp]
    \centering
    \includegraphics[width=0.95\linewidth]{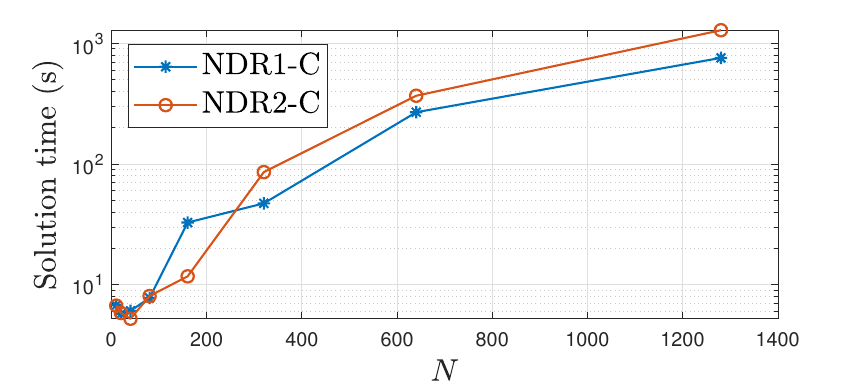}
    \caption{Computation time of Algorithm \ref{alg1} for varying sample sizes $N$}
    \label{fig_time}
\end{figure}
\section{Case Study on A Realistic Lithium-ion Battery Cell}\label{simu2}
Due to the substantial energy density and high efficiency, lithium-ion batteries have found widespread applications in power grids, electric vehicles and portable electronics \citep{jin2023combined,liu2016structural}. Thus, there is a critical need for advanced FD techniques to ensure the safe operation of battery cells \citep{wang2024voltage,xiong2019sensor}. In this section, we perform the case study using real-world data collected from the charging of a lithium-ion cell. Based on Thevenin's theorem, the equivalent battery circuit model \citep{xiong2019sensor,wang2024voltage} consists of an ohmic resistance $R_{\rm o}$,  a resistor-capacitor pair$\{R_{\rm par}, C_{\rm par}\}$ and open-circuit voltage $U_{\rm oc}$, which are shown in Fig. \ref{battery}. According to Kirchhoff’s law, its dynamics can be written as:
\begin{equation*}\begin{cases}
\dot{U}_{\rm par} = -\frac{1}{C_{\rm par} R_{\rm par}} U_{\rm par} + \frac{1}{C_{\rm par}} I_{\rm L}\\
\dot{S}=-\frac{1}{ Q}I_{\rm L}\\
U_{\rm t} = U_{\rm oc} - U_{\rm par} - I_{\rm L} R_{\rm o} &\end{cases} 
\end{equation*}
where $U_{\rm par}$, $I_{\rm L}$, $Q$, $S$ and $U_{\rm t}$ are the voltage of $R_{\rm par}$, the battery current across the battery, the maximum charge capacity, the state-of-charge and the terminal voltage. $U_{\rm oc}$ is commonly modeled as a polynomial function related to the  $S$ \citep{jin2023combined,xiong2019sensor}. Thus, the charging process can be represented as a nonlinear system with state $\left [ U_{\rm par} ~S \right]^\top$, input $I_{\rm L}$ and output $U_{\rm t}$.
\begin{figure}[htbp]
    \centering
    \includegraphics[width=0.95\linewidth]{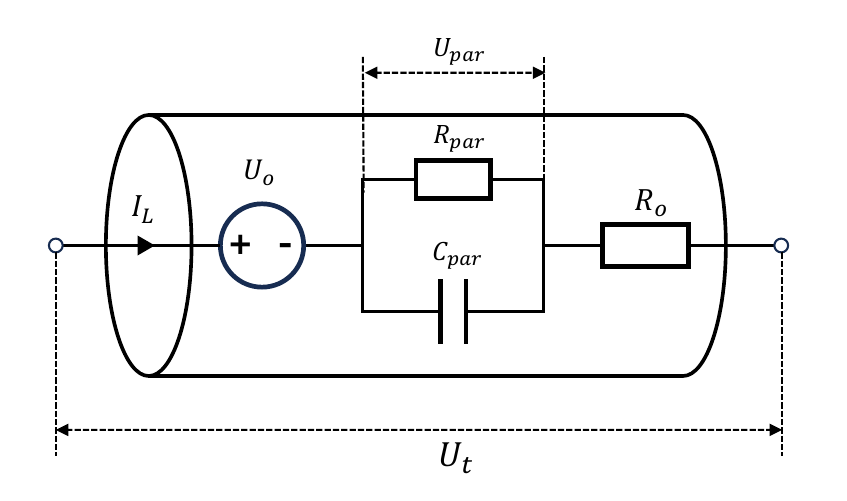}
\caption{ Equivalent circuit model of a single lithium-ion cell}
    \label{battery}
\end{figure}

As illustrated in Fig. \ref{fig_platform}, the real-world experimental platform consists of a temperature tester, a battery tester, a thermostat, and a computer, where all data, including voltage, current, and temperature, are monitored and sampled at a frequency of $1$Hz. More details about the experimental platform setup can be found in \cite{jin2023combined}. The input and output of the system are the current across the battery and the terminal voltage. 
\begin{figure}[htbp]
    \centering
    \includegraphics[width=0.8\linewidth]{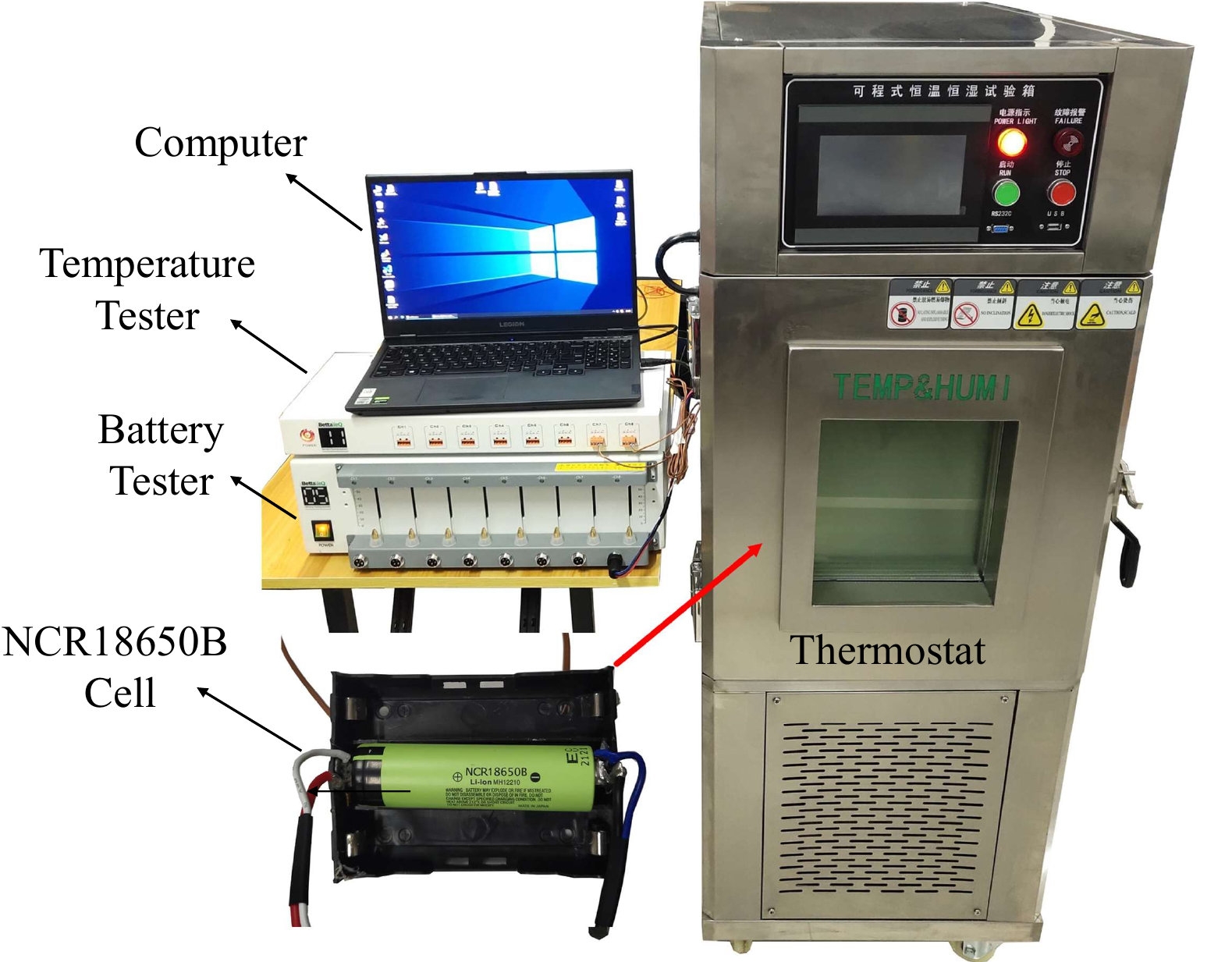}
    \caption{Battery cell experimental platform}
    \label{fig_platform}
\end{figure}

A data-driven SKR-based residual generator \cite[Algorithm 3.3]{ding2014dsubspace} is implemented based on an input-output data matrix with depth $s=5$ and system order $n_x=2$. We use $N = 200$ samples of residuals to construct the Wasserstein ambiguity set. A $180~{\rm mA}$ actuator bias on $I_{\rm L}$ is considered as a single critical fault $f_1 = \bar{f}$, which is known in the design phase. We set $\alpha=2\%$, $\beta = 90\%$, and use different values of $\gamma$, thereby yielding three DRFD designs, i.e., NDR$i$-A ($\gamma = 10$), NDR$i$-B ($\gamma = 100$) and NDR$i$-C ($\gamma = 500$). To evaluate the fault detection performance, we also take into account another two unknown faults:
\begin{itemize}
    \item Fault $f_2$: A $250~{\rm mV}$ sensor bias on the terminal voltage $U_t$.
    \item Fault $f_3$: A hybrid fault comprising a $-100~{\rm mA}$ actuator bias on $I_{\rm L}$ and a $120~{\rm mV}$ sensor bias on $U_t$.
\end{itemize}

\noindent An input-output data trajectory is collected from a real-world experiment, and the fault is injected from the $151$st sample into the system till the end. 

The computed FARs, FDRs and robustness indices are presented in Table \ref{tab_FAR2}, and the profile of evaluation function $J(r)$ is shown in Fig. \ref{fig_residual_real}.
It is clear that the FARs are safely controlled below the tolerance $\alpha$ in each DRFD design. The generic DRFD design has lower FDRs than NDR$i$ in all three faulty cases. First, the injection of prior fault information in NDR$i$ leads to an effective and significant improvement of detectability for the known fault $f_1$. 
Interestingly, unlike the three-tank system where performance on $f_2$ degraded, this case study illustrates that prioritizing $f_1$ can sometimes coincidentally improve the detectability of another faults $f_2$ and $f_3$. A possible explanation is that enhancing the detectability for known $f_1$ does not guarantee a specific outcome for an unknown fault. That is, this impact may be either beneficial or detrimental depending on the nature of another fault and the dynamic of the system. In addition, the increased $\gamma$ typically results in better detection of $f_1$ but does not always improve the detectability of $f_2$ and $f_3$, mainly because the distributional robustness $\eta$ of detecting $f_1$ is improved at the sacrifice of the overall detectability. 

\begin{figure*}[htbp]   
    \centering    
        \subfloat[FD designs using overall detectability metric $\rho_1(\cdot)$]{
        \includegraphics[width=0.95\linewidth]{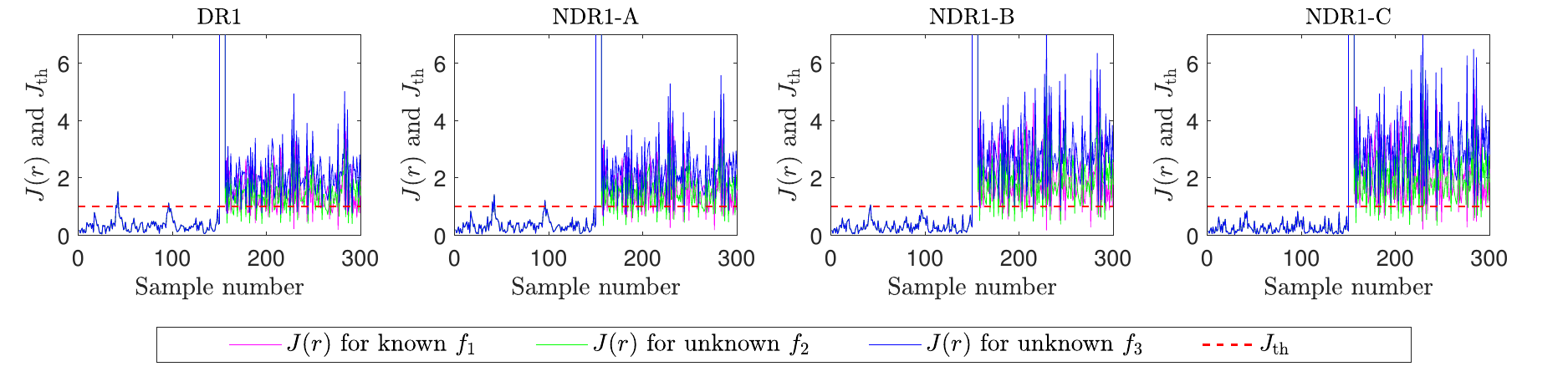} 
        \label{fig_residual_real_1}
    }\hspace{0.5cm}
              \subfloat[FD designs using overall detectability metric $\rho_2(\cdot)$]{
        \includegraphics[width=0.95\linewidth]{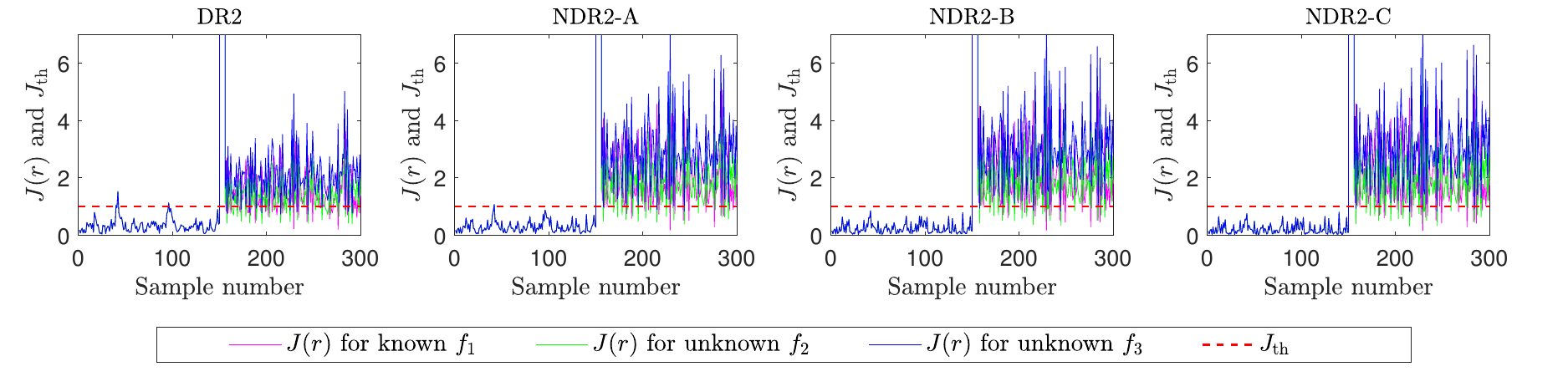} 
        \label{fig_residual_real_2}
    }
    \caption{FD results on the real battery cell charging}
    \label{fig_residual_real}
\end{figure*}

\begin{table*}[htpb] 
\centering
\caption{
FAR, FDR and Distributional Robustness of DRFD Designs on Battery Cells}\label{tab_FAR2}
\begin{tabular}{lcccccccc}
\toprule
        & DR$1$ & NDR$1$-A & NDR$1$-B& NDR$1$-C& DR$2$ & NDR$2$-A & NDR$2$-B& NDR$2$-C\\
\midrule
FAR (\%)& 2.00&2.00&0.67&0.00&2.00&0.67&0.00&0.00\\
FDR$1$ (\%)&77.33&78.67&87.33&88.00&77.33&87.33&88.00&88.00\\
FDR$2$ (\%)&75.33&75.33&86.67&86.67&75.33&86.67&85.33&84.67 \\
FDR$3$ (\%)&91.33&93.33&98.67&97.33&91.33&98.00&96.67&96.67\\
$\eta$&/&
0.0499& 0.1558&  0.1728& /&   0.1578  &  0.1740  &  0.1761\\
\bottomrule
\end{tabular}
\end{table*}

\section{Conclusion} \label{concl}
In this work, a new DRFD design scheme was proposed that incorporates prior information of critical faults and safely keeps FAR under control without exactly knowing the probabilistic distribution of uncertainty. Built upon a new relaxed distributionally robust chance constraint, the proposed approach makes it feasible to balance between a robust detection of critical faults and maintaining the detectability of unknown faults. Then, an exact reformulation of the proposed new DRFD problem was derived as a semi-definite program with BMIs. We developed a tailored heuristic algorithm, including an initialization strategy and a sequential minimization procedure, which helps to find a high-quality solution of the problem by solving SDPs. Finally, comprehensive case studies on a simulated three-tank system and a realistic lithium-ion battery cell showed that our proposed heuristic solution algorithm can efficiently find a high-quality design, which achieves a more balanced performance than generic DRFD designs. It not only maintains good sensitivity against unknown faults but also ensures a robust detection of known critical faults. In future work, it is worth investigating how to relax the dependence on the exact fault information and further address time-varying disturbances by injecting more probabilistic information.




\section*{Declaration of Competing Interest}
The authors declare that they have no known competing financial interests or personal relationships that could have appeared to influence the work reported in this paper.

\section*{Acknowledgments}
This work was supported by the National Natural Science Foundation of China (Grant No. 62373211), and the Open Research Project of the State Key Laboratory of Industrial Control Technology, China (Grant No. ICT2025B10).





\bibliographystyle{cas-model2-names}

\bibliography{ref}

\end{document}